\documentclass[12pt]{article}

\setlength{\textwidth}{6.3in}
\setlength{\textheight}{8.7in}
\setlength{\topmargin}{0pt}
\setlength{\headsep}{0pt}
\setlength{\headheight}{0pt}
\setlength{\oddsidemargin}{0pt}
\setlength{\evensidemargin}{0pt}

\usepackage{amsmath, amssymb, amsthm}
\usepackage{hyperref}

\usepackage{enumerate}
\usepackage{bm}
\usepackage{graphicx}

\usepackage{pgf,tikz}
\usetikzlibrary{arrows}

\usepackage{color}

\usepackage{algorithm, algorithmic}

\newcommand{\N}{\mathbb{N}}
\newcommand{\dist}{\mathrm{d}}

\newcommand{\m}[1]{\mathcal{#1}}

\mathchardef\mh="2D

\newtheorem{thm}{Theorem}[section]

\newtheorem{lem}[thm]{Lemma}
\newtheorem{cor}[thm]{Corollary}
\newtheorem{remark}[thm]{Remark}

\usepackage{authblk}
\begin{document}
\title{On generalized hexagons of order $(3, t)$ and $(4, t)$ containing a subhexagon}
\author[1]{Anurag Bishnoi \thanks{anurag.2357@gmail.com}}
\author[2]{Bart De Bruyn \thanks{bdb@cage.ugent.be}}
\affil[1,2]{Department of Mathematics, Ghent University}

\maketitle

\begin{abstract}
We prove that there are no semi-finite generalized hexagons with $q + 1$ points on each line containing the known generalized hexagons of order $q$ as full subgeometries when $q$ is equal to $3$ or $4$, thus contributing to the existence problem of semi-finite generalized polygons posed by Tits. The case when $q$ is equal to $2$ was treated by us in an earlier work, for which we give an alternate proof. For the split Cayley hexagon of order $4$ we obtain the stronger result that it cannot be contained as a proper full subgeometry in any generalized hexagon.
\end{abstract}

\medskip \noindent {\small \textbf{MSC2010:} 51E12, 05B25\\
\textbf{Keywords:} generalized hexagon, subhexagon, semi-finite generalized polygon}

\section{Introduction}

Generalized polygons were introduced by Jacques Tits in 1959 \cite{Tits} and they are now an integral part of incidence geometry, with connections to several areas of mathematics like group theory, extremal graph theory, algebraic coding theory and design theory. For a given positive integer $n \geq 2$, a generalized $n$-gon can simply be defined as a point-line geometry whose incidence graph has diameter $n$ and girth $2n$. The generalized $3$-gons are equivalent to projective planes, while generalized $4$-gons are precisely the rank $2$ polar spaces \cite{FGQ,Tits2}. By a famous result of Feit and Higman \cite{Feit-Higman}, finite generalized $n$-gons of order $(s, t)$ with $s, t \geq 2$ (the so-called thick ones) exist only for $n \in \{ 2, 3, 4, 6, 8 \}$. In this paper we will mainly be concerned with generalized $6$-gons.

An important open problem in the theory of generalized polygons is the existence of semi-finite generalized polygons, asked by Tits \cite[Appendix E: Problem 5]{VM}. 
These are thick generalized polygons which have finitely many points on each line but infinitely many lines through each point. It is known that semi-finite generalized quadrangles of order $(s,\infty)$ (where $\infty$ is any infinite cardinal number)  do not exist for $s = 2$, $3$ or $4$, as proved by Cameron \cite{Ca}, Brouwer \cite{Br} (independently by Kantor) and Cherlin \cite{Ch}, respectively.  But no such results are known for generalized hexagons or octagons without making any extra assumptions. 
In \cite{ab-bdb:1} we proved among other things that there is no semi-finite generalized hexagon with three points on each line containing a subhexagon of order $(2, 2)$. In this paper we extend this result to semi-finite generalized hexagons of orders $(3, \infty)$ and $(4, \infty)$. 

Every known finite generalized hexagon with an order has its order equal to either $(1, 1)$ (for an ordinary hexagon), $(q, 1)$, $(1, q)$,  $(q, q)$,  $(q, q^3)$ or $(q^3, q)$, where $q$ is a prime power \cite[Chapter 2]{VM}. 
%Let $q$ be a prime power. 
Every generalized hexagon of order $(1,q)$ is isomorphic to the geometry $\mathrm{H}_\pi$ obtained by taking the vertices and edges of the incidence graph of a finite projective plane $\pi$ of order $q$ as points and lines, while every generalized hexagon of order $(q, 1)$ is the point-line dual of such a hexagon. 
If $\pi=\mathrm{PG}(2,q)$, then we also denote $\mathrm{H}_\pi$ by  $\mathrm{H}(1,q)$ and its point-line dual by $\mathrm{H}(q,1)$. The other known generalized hexagons are the split Cayley hexagons of order $q$, denoted by $\mathrm{H}(q)$, the twisted triality hexagons of order $(q^3, q)$, denoted by $\mathrm{T}(q^3, q)$, and their point-line duals. The split Cayley hexagon $\mathrm{H}(q)$ is isomorphic to its dual $\mathrm{H}(q)^D$ if and only if $q$ is a power of $3$ \cite[Cor.~3.5.7]{VM}. 
By a result of Cohen and Tits \cite{CT}, every generalized hexagon of order $(2, 2)$ is isomorphic to the split Cayley hexagon $\mathrm{H}(2)$ or its dual $\mathrm{H}(2)^D$, and every generalized hexagon of order $(2, 8)$ is isomorphic to the dual twisted triality hexagon $\mathrm{T}(8, 2)^D$. 
We also have the following inclusion of geometries: $\mathrm{H}(q, 1) \subseteq \mathrm{H}(q)^D \subseteq \mathrm{T}(q^3, q)^D$ (see \cite{jdk-hvm,VM}). When $q$ is not a power of $3$, i.e., when $\mathrm{H}(q) \not \cong \mathrm{H}(q)^D$, no generalized hexagon is known that contains $\mathrm{H}(q)$ as a full proper subgeometry.\footnote{A subgeometry $\m S$ of a point-line geometry $\m S'$ is called full if for every line $L$ of $\m S$, the set of points incident with $L$ in $\m S$ is equal to the set of points incident with $L$ in $\m S'$.} 
Our main result of this paper is as follows.

\begin{thm} \label{thm:main1}
Let $q \in \{2, 3, 4\}$ and let $\m H$ be a generalized hexagon isomorphic to the split Cayley hexagon $\mathrm{H}(q)$ or its dual $\mathrm{H}(q)^D$. Then the following holds for any generalized hexagon $\mathcal{S}$ that contains $\m H$ as a full subgeometry:
\begin{enumerate}[$(1)$]
\item $\m S$ is finite; 
\item if $q \in \{2, 4\}$ and $\m H \cong \mathrm{H}(q)$, then $\m S = \m H$. 
\end{enumerate}
\end{thm}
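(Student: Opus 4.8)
The plan is first to determine the shape of $\m S$ and then to reduce everything to a classification of certain distance functions on $\m H$. Since $\m H$ is a full subgeometry, each of its lines is a line of $\m S$ with $q+1$ points and each of its points lies on $q+1\ge 3$ lines of $\m S$; thus $\m S$ has a thick line and a thick point, hence an order $(q,t)$ with $t\in\N\cup\{\infty\}$. Assertion $(1)$ is the claim that $t$ is finite, and assertion $(2)$ is equivalent to $t=q$, since $\m H$ is full and of order $(q,q)$ and a short connectedness argument then gives $\m S=\m H$. A preliminary lemma records that $\m H$ is isometrically embedded, i.e.\ $\dist_{\m S}(u,v)=\dist_{\m H}(u,v)$ for all elements $u,v$ of $\m H$: the inequality $\le$ is immediate, and the reverse is obtained from a counterexample of minimal $\m S$-distance using the fullness of $\m H$ and the girth $12$. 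Two consequences are used throughout: a line of $\m S$ through a point of $\m H$ either lies in $\m H$ or meets $\m H$ in exactly that one point; and, since two points of a generalized hexagon at distance $4$ are joined by a \emph{unique} geodesic, every point $x$ of $\m S$ with $\dist_{\m S}(x,\m H)=2$ is collinear with a \emph{unique} point $\pi(x)$ of $\m H$ (a second such point would yield a $6$-cycle or contradict uniqueness of geodesics).

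To each point $x$ of $\m S$ I attach the function $f_x\colon u\mapsto\tfrac12\bigl(\dist_{\m S}(x,u)-\dist_{\m S}(x,\m H)\bigr)$ on the points of $\m H$. Using only the fact that a line has a unique point nearest any given point, one sees that on each line of $\m H$ the function $f_x$ attains its minimum at a single point and takes there exactly two consecutive values; I call such a function a valuation of $\m H$. A point of $\m H$ induces the classical valuation $u\mapsto\tfrac12\dist_{\m H}(p,u)$, which attains the value $3$; but if $x$ lies outside $\m H$ then $\dist_{\m S}(x,\m H)\in\{2,4\}$ (the value $6$ is impossible, as $f_x$ cannot be identically $0$) and $f_x\le 3-\tfrac12\dist_{\m S}(x,\m H)\le 2$, so $f_x$ is non-classical and has a constrained shape: when $\dist_{\m S}(x,\m H)=4$ the set $f_x^{-1}(0)$ meets every line of $\m H$ in exactly one point, and when $\dist_{\m S}(x,\m H)=2$ the set $f_x^{-1}(0)\cup f_x^{-1}(1)$ is a hyperplane of $\m H$ containing $\pi(x)$ and all of its neighbours. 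The technical core of the paper is the complete classification of the valuations of $\mathrm{H}(q)$ and $\mathrm{H}(q)^D$ for $q\in\{2,3,4\}$: for $q=2$ this reproves, by a uniform method, the input used in \cite{ab-bdb:1}, while for $q\in\{3,4\}$ I expect a computer search to be unavoidable, organised orbit by orbit under the (large) automorphism groups of the standard models and pruned by the valuation axioms; for $\mathrm{H}(4)$, a hexagon on $1365$ points, this is where most of the work will lie.

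Given the classification, $(1)$ follows from a count local to $\m H$: each point $p$ of $\m H$ lies on exactly $t-q$ lines of $\m S$ outside $\m H$, each carrying $q$ points at $\m S$-distance $2$ from $\m H$ with $\pi$-value $p$, and comparing this with the \emph{finite} list of valuations that can in fact occur---together with the compatibility conditions these impose on the $q+1$ lines of $\m H$ through $p$---bounds $t-q$, so $t$ is finite. Once $t<\infty$, a global count sharpens this: $\m S$ has $(q+1)q(t-q)(1+qt+q^2)$ points outside $\m H$, of which exactly $(q+1)(q^4+q^2+1)\,q(t-q)$ lie at distance $2$ from $\m H$ (no overcounting, as $\pi$ is single-valued), so the number at distance $4$ is $(q+1)q^2(t-q)(t-q^3)$, which is non-negative; hence $t\le q$ or $t\ge q^3$, and with the Haemers--Roos bound $t\le q^3$ we conclude $t\in\{q,q^3\}$. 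In particular $\m S$ is finite, which proves $(1)$.

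For $(2)$ it remains to rule out $t=q^3$ when $q\in\{2,4\}$ and $\m H\cong\mathrm{H}(q)$, equivalently to show that $\mathrm{H}(q)$ is not a full subgeometry of a generalized hexagon of order $(q,q^3)$. The classification settles this: for $\mathrm{H}(2)$ and $\mathrm{H}(4)$ there is no valuation of the type that an exterior point of such an extension would be forced to induce, so no exterior point can exist and $\m S=\m H$. The case $q=3$ genuinely behaves differently---$\mathrm{H}(3)\cong\mathrm{H}(3)^D$ is a full proper subgeometry of $\mathrm{T}(27,3)^D$, of order $(3,27)$, and its exterior points do produce valuations of $\mathrm{H}(3)$---which is exactly why $(2)$ is restricted to $q\in\{2,4\}$ and why the dual hexagons are left out of it, since $\mathrm{H}(q)^D\subseteq\mathrm{T}(q^3,q)^D$. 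The single hardest step of the argument should be the valuation classification for $\mathrm{H}(4)$ and $\mathrm{H}(4)^D$: the search space is large enough that the computation has to be organised carefully around the automorphism groups and the arithmetic of the valuation axioms, and then turning the classification into the bound on $t$ will take some further bookkeeping.
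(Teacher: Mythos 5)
Your overall framework (exterior points of $\m S$ inducing ``valuations'' of $\m H$, with the three cases classical/semi-classical/ovoidal according to $\dist(x,\m H)$, and a computer classification in the small cases) matches the paper's, but the mechanism by which you propose to derive contradictions does not work. You claim that for $\mathrm{H}(2)$ and $\mathrm{H}(4)$ ``there is no valuation of the type that an exterior point of such an extension would be forced to induce.'' That is false: $\mathrm{H}(4)$ has $1$-ovoids (two up to isomorphism, by De Wispelaere--Van Maldeghem), $\mathrm{H}(2)$ and $\mathrm{H}(3)$ have $1$-ovoids as well, and all of $\mathrm{H}(2)$, $\mathrm{H}(3)$, $\mathrm{H}(4)$ admit semi-classical valuations. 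So the non-existence of \emph{individual} valuations cannot rule out exterior points, and your endgame for both $(1)$ (the $\mathrm{H}(3)$ case) and $(2)$ collapses. What the paper uses instead is a compatibility condition on \emph{pairs} of induced hyperplanes: for a line $L$ of $\m S$ disjoint from $\m H$, the $q+1$ hyperplanes $H_x$ ($x$ on $L$) cover the point set of $\m H$ and have pairwise constant intersection, which by a size count forces $|H_x\cap H_y|=q+1-n_L\le q+1$ (Lemmas \ref{lem:hyperplanes} and \ref{lem:main}). The computations then verify that every relevant pair of ovoidal/semi-singular hyperplanes (with appropriately placed centers) meets in \emph{more} than $q+1$ points, giving the contradiction (Lemma \ref{lem:intersecting_semi-classical} for part $(2)$; the ovoidal-versus-semi-singular version for $\mathrm{H}(3)$). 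This pairwise intersection constraint is the essential missing idea in your proposal; note also that it is genuinely needed, since $\mathrm{H}(3)\cong\mathrm{H}(3)^D\subseteq\mathrm{T}(27,3)^D$ shows individual valuations cannot be the obstruction.

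There is a second gap in your finiteness count. Bounding $t-q$ by the (finite) number of valuations with center $p$ requires that distinct exterior points collinear with $p$ induce distinct valuations of $\m H$; Lemma \ref{lem:embed}$(3)$ gives this only for \emph{collinear} pairs of such points, not for two points on different lines of $\m S$ through $p$, and you offer no argument for the latter. The paper's Lemma \ref{lem:main2} counts differently: it fixes an exterior point $x$ at distance $1$ from $\m H$ and injects the lines of $\m S$ through $x$ into the point set of $\m H$ using uniqueness of common neighbours of points at distance $2$ --- but this only applies once every point of $\m S$ is known to lie at distance at most $1$ from $\m H$, which is exactly what the intersection lemmas (or, for $\mathrm{H}(2)^D$ and $\mathrm{H}(4)^D$, the non-existence of $1$-ovoids) establish first. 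Your global count giving $t\in\{q,q^3\}$ is correct but presupposes $t<\infty$ and so cannot substitute for either step; your isometric-embedding lemma and the uniqueness of $\pi(x)$ are fine and agree with Lemma \ref{lem:embed}.
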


\medskip \noindent 
There are some other known results regarding generalized polygons containing certain subpolygons. 
De Medts and Van Maldeghem \cite{DeMedts-VanMaldeghem} proved that $\mathrm{H}(3)$ is the unique generalized hexagon of order $(3, 3)$ containing a subhexagon of order $(3, 1)$. 
A similar characterization of the Ree-Tits octagon of order $(2, 4)$ was given by the second author in \cite{bdb:Ree-Tits} where it was proved that this is the unique generalized octagon of order $(2, 4)$ containing a suboctagon of order $(2, 1)$. 
De Kaey and Van Maldeghem \cite{jdk-hvm} proved the uniqueness of $\mathrm{H}(q)^D$ containing $\mathrm{H}(q, 1)$ by assuming certain extra group theoretical conditions. 

Our proof of Theorem \ref{thm:main1} relies on results about intersection sizes of different ``types'' of hyperplanes in arbitrary finite generalized hexagons of order $(s, t)$ that we obtain in Lemmas \ref{lem:main} and \ref{lem:intersecting_semi-classical}. 
These results are then used along with some computations to deal with the cases where $\m H$ is isomorphic to $\mathrm{H}(2), \mathrm{H}(3)$ or $\mathrm{H}(4)$. 
We hope that these results on intersections of hyperplanes would be interesting in their own right, and useful in obtaining further such results. 
The remaining cases where $\m H$ is isomorphic to $\mathrm{H}(2)^D$ or $\mathrm{H}(4)^D$ are handled by showing that there are no $1$-ovoids (also known as distance-$2$ ovoids) in these geometries. While the non-existence of $1$-ovoids in $\mathrm{H}(2)^D$ is computationally easy to show by an exhaustive computer search, the fact that $\mathrm{H}(4)^D$ has $1365$ points and $1365$ lines makes it quite hard to determine whether $\mathrm{H}(4)^D$ has $1$-ovoids.  We use the result of Bishnoi and Ihringer \cite{ab-fi} that there are no $1$-ovoids in $\mathrm{H}(4)^D$ to finish the proof.
Note that there are no known general results on existence or non-existence of $1$-ovoids in the split Cayley hexagons and their duals, and proving existence or non-existence of $1$-ovoids in these geometries seems like a really hard problem (see \cite{ab-fi} for the current state of our knowledge).
Thus it is natural to try computer-aided proofs in the small cases. 

\section{Basic definitions and properties}  \label{sec:def}

Suppose $\mathcal{S}$ is a point-line geometry with (nonempty) point set $\mathcal{P}$, line set $\mathcal{L}$ and incidence relation $\mathrm{I} \subseteq \m P \times \m L$. The \textit{(point-line) dual} of $\mathcal{S}$ is the point-line geometry $\m S^D = (\m L, \m P, \mathrm{I}^D)$, where $\mathrm{I}^D = \{ (L, x) \, : \, (x, L) \in \mathrm{I} \}$. $\mathcal{S}$ is called a {\em partial linear space} if every two distinct points are incident with at most one common line.

Suppose $\mathcal{S}$ is a partial linear space. A line of $\mathcal{S}$ is called {\em thick} if it contains at least three points. We say that $\mathcal{S}$ has order $(s, t)$ if every line is incident with $s + 1$ points and every point is incident with $t + 1$ lines. If $s = t$, then we simply say that $\mathcal{S}$ has order $s$. The {\em point graph} (or {\em collinearity graph}) of $\m S$ is the graph with vertex set $\m P$ where two points are adjacent whenever they are incident with a common line, i.e., whenever they are collinear. The bipartite graph between points and lines with an edge denoting incidence is called the \textit{incidence graph} of $\mathcal{S}$. In this paper the distance between two points $x$ and $y$ of $\m S$, denoted by $\dist_{\m S}(x, y)$ or simply $\dist(x, y)$, will be the distance between $x$ and $y$ in the point graph of $\m S$. The distance between a point $x \in \m P$ and a nonempty set $X \subseteq \m P$ is defined as the minimum distance between $x$ and a point $y \in X$. We denote the set of points at distance $i$ from a fixed point $x$ by $\Gamma_i(x)$, and similarly the set of points at distance $i$ from a given nonempty set $X$ of points by $\Gamma_i(X)$.  A subset $X$ of points of $\m S$ is called a \textit{subspace} if for every pair of distinct collinear points contained in $X$, all points incident with the unique line joining them are also contained in $X$. The partial linear space $\m S = (\m P, \m L, \mathrm{I})$ is a \textit{subgeometry} of another partial linear space $\m{S}' = (\m{P} ', \m{L}', \mathrm{I} ')$ if $\m{P} \subseteq \m{P}'$, $\m{L} \subseteq \m{L} '$ and $\mathrm{I} = \mathrm{I} ' \cap (\m{P}  \times \m{L})$. The subgeometry is called \textit{full} if for every line $L$ in $\m{L}$ the set $\{x \in \m{P} : x ~\mathrm{I}~ {L}\}$ is equal to $\{x \in \m{P}' : x ~\mathrm{I}'~ L\}$. 

As noted before, a  generalized $n$-gon, for $n \geq 2$, is a point-line geometry whose incidence graph has diameter $n$ and girth $2n$. A generalized polygon is called \textit{thick} if it has at least three points on each line and at least three lines through each point. It can be shown that every (possibly infinite) thick generalized $n$-gon with $n \geq 2$ has an order $(s, t)$ for some fixed (possibly infinite) $s$ and $t$. Clearly, the point-line dual of a generalized $n$-gon of order $(s, t)$ is a generalized $n$-gon of order $(t, s)$. By the Feit-Higman theorem \cite{Feit-Higman}, thick finite generalized $n$-gons exist only for $n \in \{2, 3, 4, 6, 8\}$. The ordinary $n$-gon, which is a generalized $n$-gon of order $(1, 1)$, exists for all $n \geq 3$. All finite non-thick generalized polygons can be obtained from ordinary polygons or thick generalized polygons, see \cite[Thm. 1.6.2]{VM}. For $n = 2$, we have the geometry where every point is incident with every line and for $n = 3$ we have a finite projective plane. Generalized $n$-gons for $n = 4$, $6$ and $8$ are referred to as generalized quadrangles, hexagons and octagons, respectively. Since thick finite generalized $n$-gons for $n > 3$ exist only for even $n$, we can denote them as generalized $2d$-gons, where $n = 2d$ and $d$ is the diameter of the point graph. A \textit{near $2d$-gon} with $d \in \mathbb{N}$ is a partial linear space $\mathcal{S}$ that satisfies the following properties:
\begin{enumerate} [(NP1)]
\item The point graph of $\mathcal{S}$ is connected and has diameter $d$. 
\item For every point $x$ and every line $L$ there exists a unique point $\pi_L(x)$ incident with $L$ that is nearest to $x$. 
\end{enumerate} 
It is well known that a near $2d$-gon, $d \geq 2$, is a generalized $2d$-gon if and only if the following two additional properties are satisfied:
\begin{enumerate}[$(1)$]
\item Every point is incident with at least two lines.
\item For every two points $x$ and $y$ at distance $i \in \{1, 2, \ldots, d - 1\}$ from each other, there exists a unique point collinear with $y$ at distance $i-1$ from $x$, i.e., $|\Gamma_{i - 1}(x) \cap \Gamma_1(y)| = 1$.  
\end{enumerate}

Putting $d = 3$ we get that a point-line geometry is a generalized hexagon if and only if it is a near hexagon in which every pair of points at distance $2$ from each other have a unique common neighbour and every point is incident with at least two lines. 

For the definitions of the split Cayley hexagon $\mathrm{H}(q)$ and the twisted triality hexagon $\mathrm{T}(q^3, q)$ defined over the finite field $\mathbb{F}_q$, we refer to \cite[Chapter~2]{VM}. We shall only need the following facts about generalized hexagons. A finite generalized hexagon of order $(s, t)$ has $(1 + s)(1 + st + s^2t^2)$ points and $(1 + t)(1 + st + s^2t^2)$ lines. The number of points at distance $i$ from a fixed point in such a generalized hexagon is equal to $1, s(t+1), s^2t(t+1), s^3t^2$ for $i = 0,1,2,3$, respectively. 
Let $q$ be a prime power $p^r$, where $p$ is prime and $r$ is a positive integer. Then the automorphism group of $\mathrm{H}(q, 1)$ is isomorphic to $\mathrm{P\Gamma L}_3(q) \rtimes C_2$ and thus it has size $2r(q^3 - 1)(q^3 - q)(q^3  - q^2)/(q - 1)$.
The automorphism group of $\mathrm{H}(q)$ is isomorphic to $\mathrm{G}_2(q) \rtimes \mathrm{Aut}(\mathbb{F}_q)$ and thus it has size $rq^6(q^6 - 1)(q^2 - 1)$. 
The automorphism groups of all known finite thick generalized hexagons act primitively and distance transitively on the points  of the generalized hexagons \cite{Bu-VM}. 

\section{Hyperplanes and valuations} \label{sec:val}

Given a partial linear space $\m S = (\m P, \m L, \mathrm{I})$, a {\em hyperplane} of $\m S$ is a proper subset $H$ of $\m P$ having the property that each line has either one or all its points in $H$.  A \textit{$1$-ovoid} in  $\m S$ is a set $O$ of points having the property that each line of $\m S$ contains a unique point of $O$. Clearly, every $1$-ovoid is a hyperplane. 

Suppose $\m S = (\m P, \m L, \mathrm{I})$ is a generalized $2d$-gon with $d \in \N \setminus \{ 0,1 \}$.  Then a (polygonal)  \textit{valuation} of $\m S$ is a map $f : \m P \rightarrow \mathbb{N}$ that satisfies the following conditions:
\begin{enumerate}[(PV1)]
\item There exists at least one point with $f$-value $0$. 
\item Every line $L$ of $\m S$ contains a unique point $x_L$ such that $f(x) = f(x_L) + 1$ for all points $x \neq x_L$ contained in $L$.  
\item Let $M_f$ denote the maximum value of $f$ over the points of $\m S$.\footnote{It is easy to show that every function satisfying the first two axioms takes values from the set $\{0, \dots, d\}$, and thus has a maximum value.} 
If $x$ is a point with $f(x) < M_f$, then there is at most one line through $x$ containing a (necessarily unique) point $y$ satisfying $f(y) = f(x) - 1$.  
\end{enumerate}

The notion of valuation of a generalized polygon was introduced by the second author in \cite{bdb:polygonal}. It is inspired from a more general notion of valuations of near polygons \cite{bdb-pvdc:1,bdb:valuation,ab-bdb:1}. Valuations have been used to obtain several classification results for near polygons (see \cite{bdb:survey} for a survey).

From property (PV2) it follows that given a valuation $f$ of $\m S$, the set of points with non-maximal $f$-value (i.e. with $f$-value smaller than $M_f$) is a hyperplane of $\m S$, which we denote by $H_f$.  An arbitrary hyperplane $H$ of $\m S$ is said to be of {\em valuation type} if there exists a valuation $f$ of $\m S$ such that $H = H_f$. From the following result it follows that the valuations of a generalized polygon are in bijective correspondence with its hyperplanes of valuation type \footnote{This does not hold true for the more general notion of valuation used in \cite{ab-bdb:1}.} . 

\begin{lem}[{\cite[Prop 3.10]{bdb:polygonal}}]
If $f$ is a valuation of a generalized $2d$-gon $\m S = (\m P, \m L, \mathrm{I})$, then $f(x)=M_f-\dist(x,\m P \setminus \mathcal{H}_f)$ for every point $x$ of $\mathcal{S}$.
\end{lem}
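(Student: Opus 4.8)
The plan is to prove the claimed identity by establishing the two inequalities between $f(x)$ and $M_f - \dist(x,\m P\setminus\m H_f)$ separately. First I would fix the set-up: $\m P\setminus\m H_f$ is precisely the set of points at which $f$ attains its maximum value $M_f$, this set is nonempty, and the point graph of a generalized polygon is connected (indeed of finite diameter), so $\dist(x,\m P\setminus\m H_f)$ is a well-defined nonnegative integer for every point $x$.

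For the inequality $f(x)\ge M_f-\dist(x,\m P\setminus\m H_f)$, I would first record the elementary observation that $|f(x)-f(y)|\le 1$ whenever $x$ and $y$ are distinct collinear points: writing $x_L$ for the distinguished point of the line $L=xy$ supplied by (PV2), either $x_L\notin\{x,y\}$ and then $f(x)=f(y)=f(x_L)+1$, or $x_L\in\{x,y\}$ and then the two values are $f(x_L)$ and $f(x_L)+1$. Choosing a point $y$ with $f(y)=M_f$ that realises $\dist(x,\m P\setminus\m H_f)=:m$ and walking along a shortest $x$--$y$ path of length $m$, the value of $f$ can grow by at most $1$ at each step, whence $M_f=f(y)\le f(x)+m$, which is the asserted inequality.

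For the reverse inequality it suffices to produce, for each point $x$, a point of $f$-value $M_f$ at distance at most $M_f-f(x)$ from $x$. The crucial step, and the only place where axiom (PV3) is used, is the claim that \emph{every point $x$ with $f(x)<M_f$ has a neighbour $x'$ with $f(x')=f(x)+1$}. To prove this I would split the lines $L$ through $x$ into two types according to their distinguished point $x_L$: either $x_L=x$, in which case every point of $L$ other than $x$ has $f$-value $f(x)+1$; or $x_L\ne x$, in which case $f(x_L)=f(x)-1$, so $L$ contains a point of $f$-value $f(x)-1$. By (PV3) at most one line through $x$ is of the second type, and since $x$ is incident with at least two lines some line through $x$ is of the first type and yields the desired neighbour. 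Iterating, one builds a path $x=z_0,z_1,\dots,z_{M_f-f(x)}$ with $f(z_i)=f(x)+i$ terminating at a point of $f$-value $M_f$, so $\dist(x,\m P\setminus\m H_f)\le M_f-f(x)$. Combining the two inequalities finishes the proof.

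I do not anticipate a real obstacle; the one spot requiring a little care is the "increasing neighbour" claim, which genuinely needs both (PV3) and the standard fact that every point of a generalized polygon lies on at least two lines (without the latter, a point lying on a single line of the second type would admit no increasing neighbour, and both the argument and the statement could break down).
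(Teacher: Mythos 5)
Your proof is correct. Note that the paper itself gives no argument for this lemma --- it is imported verbatim from \cite[Prop.~3.10]{bdb:polygonal} --- but your two-inequality argument (the $1$-Lipschitz property of $f$ along edges of the point graph via (PV2) for the lower bound, and the ``increasing neighbour'' claim via (PV3) together with the fact that every point lies on at least two lines for the upper bound) is the standard proof and matches the approach of the cited source; you also correctly identify the two places where care is needed, namely that $M_f$ is attained and that (PV3) plus thickness of the point pencil is what guarantees an increasing neighbour.
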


\medskip \noindent Let $\m S = (\mathcal{P},\mathcal{L},\mathrm{I})$ be a generalized $2d$-gon with $d \in \N \setminus \{ 0,1 \}$. Then some examples of hyperplanes of valuation type are as follows \cite[Sec. 3]{bdb:polygonal}:
\begin{enumerate}[$(a)$]
\item Let $p$ be a point of $\m S$. Then the set $H_p = \{ x \in \m P : \dist(x, p) < d\}$ is a hyperplane of $\m S$ and the function $f(x) := \dist(p, x)$ for $x \in \mathcal{P}$ is the valuation of $\mathcal{S}$ corresponding to $H_p$. The hyperplane $H_p$ will be called a {\em singular hyperplane}\footnote{We do not want to use the terminology ``classical hyperplane'' here, since this is often used for a hyperplane that arises from some projective embedding.}, and $f$ will be called a {\em classical valuation}. The point $p$ is called the {\em center} of this hyperplane/valuation.

\item Given a $1$-ovoid $O$, the function $f(x) := 0$ for $x \in O$ and $f(x) := 1$ for $x \not \in O$ defines the valuation of $\m S$ corresponding to the hyperplane $O$. The hyperplane as well as its corresponding valuation will be called {\em ovoidal}.
Conversely, if $f$ is a valuation of $\m S$ with maximum value $1$, then the set of points where $f$ is equal to $0$ form a $1$-ovoid of $\m S$. 

\item Suppose $d \geq 3$. Let $p$ be a point of $\m S$ and let $O'$ be a $1$-ovoid of the subgeometry of $\m S$ induced on the set $\Gamma_d(p)$ by those lines that have distance $d-1$ from $p$. Define $f(x) := \dist(x, p)$ for all points $x$ at distance at most $d - 1$ from $p$, $f(x) := d - 2$ for $x \in O'$, and $f(x) := d - 1$ for the remaining points. Then $f$ is known as a {\em semi-classical valuation} of $\m S$, the hyperplane corresponding to it is known as a {\em semi-singular hyperplane}, and the point $p$ is called the {\em center} of this hyperplane/valuation. When $d$ is equal to $3$, the hyperplane is equal to $\{ p \} \cup \Gamma_1(p) \cup O'$. 
\end{enumerate}

\noindent While every generalized hexagon has singular hyperplanes, determining if it has ovoidal or semi-singular hyperplanes is a difficult problem in general. De Bruyn and Vanhove \cite[Corollary 3.19]{bdb-vanhove} proved that finite generalized hexagons of order $(s, s^3)$, $s > 1$, have no $1$-ovoids. 
In particular, this implies that the dual twisted triality hexagons $\mathrm{T}(q^3, q)^D$ do not have any $1$-ovoids.
For the split Cayley hexagons it was shown by De Wispelaere and Van Maldeghem in \cite{dw-vm:1} that $\mathrm{H}(3)$ has a unique $1$-ovoid, up to isomorphism, and then in \cite{dw-vm:1.5,dw-vm:2} they constructed two non-isomorphic $1$-ovoids of $\mathrm{H}(4)$. Later, Pech and Reichard \cite[Sec.~8.3]{Pech-Reichard} proved using an exhaustive computer search that the two examples constructed by De Wispelaere and Van Maldeghem are the only $1$-ovoids in $\mathrm{H}(4)$, up to isomorphism. In \cite{ab-fi} it has been proved that $\mathrm{H}(4)^D$ has no $1$-ovoids. To our knowledge, the existence of $1$-ovoids in $\mathrm{H}(q)$ and $\mathrm{H}(q)^D$ is not known for any $q \geq 5$. In Section \ref{sec:comp}, we discuss algorithms to compute $1$-ovoids in general point-line geometries. This would help us determine both ovoidal and semi-singular hyperplanes in small generalized hexagons. 

The following lemma shows the importance of valuations of a generalized polygon when studying all generalized polygons containing that generalized polygon as a full subgeometry. 

\begin{lem}[{\cite[Prop.~6.1]{bdb:polygonal}}] \label{lem:embed}
Let $\m S = (\m P, \m L, \mathrm{I})$ be a generalized $2d$-gon contained in a generalized $2d$-gon $\m S' = (\m P', \m L', \mathrm{I'})$ as a full subgeometry. 
Let $x$ be a point of $\m S'$ and put $m := \dist(x, \m P)$. Noting that $m \in \{0, 1, \dots, d - 1\}$, we define $f_x(y) := \dist(x, y) - m$ for every point $y \in \m P$. Then:
\begin{enumerate}[$(1)$]
\item $f_x$ is a valuation of $\m S$ with $M_{f_x} = d - m$. 

\item The valuation $f_x$ is classical if and only if $x$ is a point of $\m S$, semi-classical if and only if $m = 1$ and ovoidal if and only if $m = d - 1$. 

\item If $x_1$ and $x_2$ are two distinct collinear points of $\m S$, then the valuations $f_{x_1}$ and $f_{x_2}$ are distinct. 
\end{enumerate}
\end{lem}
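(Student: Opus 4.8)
The plan is to derive everything from two preliminary facts together with the standard properties of generalized polygons recalled in Section~\ref{sec:def}. First, for all $u,v\in\m P$ one has $\dist_{\m S}(u,v)=\dist_{\m S'}(u,v)$: the inequality ``$\ge$'' is clear, and if it were strict for some pair, a shortest $\m S'$-path from $u$ to $v$ together with an $\m S$-geodesic from $u$ to $v$ would, after cutting at the first vertex where they diverge and the next vertex where they meet again, yield a cycle in the incidence graph of $\m S'$ of length strictly less than $4d$, contradicting the girth. Second, $m\le d-1$: for any line $L$ of $\m S$ the point $\pi_L(x)$ of $\m S'$ lies in $\m P$ by fullness, and it is at distance at most $d-1$ from $x$, since otherwise all points of $L$ would be at distance $d+1$ from $x$. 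In particular $f_x$ does map $\m P$ into $\mathbb{N}$.

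For part~(1) I would verify the three axioms directly. (PV1) holds because the value $m=\dist(x,\m P)$ is attained. For (PV2), fix a line $L$ of $\m S$; by fullness all its points lie in $\m P$, so by (NP2) the point $x_L:=\pi_L(x)$ is the unique point of $L$ at minimal $\m S'$-distance from $x$, every other point of $L$ being one step further; subtracting $m$ gives (PV2), and $x_L$ is unique because two competing points would each have strictly smaller $f_x$-value than the other. For (PV3), if $y\in\m P$ has $f_x(y)<M_{f_x}$, i.e. $i:=\dist_{\m S'}(x,y)\le d-1$, then (the case $i=0$ being trivial) there is in $\m S'$ a unique point collinear with $y$ at distance $i-1$ from $x$, so at most one line of $\m S$ through $y$ can carry a point of $f_x$-value $f_x(y)-1$. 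Finally, to compute $M_{f_x}$: it is clear that $f_x\le d-m$, and for the reverse, if $m=0$ this holds because $\m S$ has diameter $d$, while if $m\ge1$ I would start from a point $y_0\in\m P$ with $\dist_{\m S'}(x,y_0)=m$ and produce points $y_0,y_1,\ldots\in\m P$ with $\dist_{\m S'}(x,y_k)=m+k$ by the following step, valid while $m+k<d$: among the $\ge2$ lines of $\m S$ through $y_k$ at most one contains a point at distance $m+k-1$ from $x$, so pick any other such line $M$; by fullness all its points lie in $\m P$ and then $\pi_M(x)=y_k$, whence every point of $M$ other than $y_k$ lies at distance $m+k+1$ from $x$, and one of these is taken as $y_{k+1}$; after $d-m$ steps this reaches a point of $\m P$ at $\m S'$-distance $d$ from $x$, so $M_{f_x}=d-m$.

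For part~(2), recall that a classical valuation has maximum value $d$, an ovoidal one has maximum value $1$, and a semi-classical one has maximum value $d-1$; hence $M_{f_x}=d-m$ already gives the ``only if'' implications. Conversely, if $m=0$ then $x\in\m P$ and $f_x=\dist_{\m S}(x,\cdot)$ by the first preliminary, the classical valuation centred at $x$; if $m=d-1$ then $M_{f_x}=1$, so $f_x$ is ovoidal. The substantive case is $m=1$. Here I would first show that there is a \emph{unique} point $p\in\m P$ with $\dist_{\m S'}(x,p)=1$: two such distinct points cannot be collinear (their common line would lie in $\m L$, forcing $x\in\m P$ by fullness), so they lie at $\m S'$-distance $2$ with $x$ as unique common neighbour, but their unique common neighbour in $\m S$ is a point of $\m P$ which is again a common neighbour in $\m S'$, hence equals $x\notin\m P$ --- absurd. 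Using the same mix of geodesic-uniqueness, girth and fullness, for $y\in\m P$ with $\dist_{\m S}(p,y)=k\le d-1$ one obtains $\dist_{\m S'}(x,y)=k+1$: the value $k-1$ would force $x$ onto the unique $p$--$y$ geodesic (hence $x\in\m P$), and the value $k$ would force the neighbour of $p$ on that geodesic onto the line through $x$ and $p$ (hence that line lies in $\m L$ and again $x\in\m P$); and $\dist_{\m S}(p,y)=d$ forces $\dist_{\m S'}(x,y)\in\{d-1,d\}$. So $f_x$ coincides with $\dist_{\m S}(p,\cdot)$ on the points at distance at most $d-1$ from $p$, and applying (PV2) to a line at distance $d-1$ from $p$ (whose point nearest to $p$ already has $f_x$-value $d-1$) shows that $\{\,y:\dist_{\m S}(p,y)=d,\ f_x(y)=d-2\,\}$ is a $1$-ovoid of the far geometry of $p$; therefore $f_x$ is exactly the semi-classical valuation centred at $p$. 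Part~(3) is then immediate: if $x_1\ne x_2$ are points of $\m S$ then $f_{x_1}$ and $f_{x_2}$ are the classical valuations centred at $x_1$ and $x_2$, and a classical valuation determines its centre as its unique point of value $0$, so $f_{x_1}\ne f_{x_2}$.

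The main obstacle is the case $m=1$ of part~(2): one must determine the exact $\m S'$-distances from $x$ to every point of $\m P$, and this is the place where the three ingredients --- uniqueness of geodesics in a generalized polygon, the girth bound, and fullness of the subgeometry --- have to be invoked together, repeatedly.
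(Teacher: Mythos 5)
The paper does not prove this lemma at all: it is imported verbatim as \cite[Prop.~6.1]{bdb:polygonal}, so there is no in-paper argument to compare against. Your proposal is a correct, self-contained reconstruction, and it rests on the right three pillars: that $\m S$ is isometrically embedded in $\m S'$ (your girth argument is the standard one and is sound), that $m\le d-1$ via the gate of any line of $\m S$, and the repeated interplay of geodesic uniqueness for points at distance at most $d-1$, the gate property, and fullness. The delicate step is indeed the one you identify: for $m=1$, ruling out $\dist_{\m S'}(x,y)\in\{k-1,k\}$ when $\dist_{\m S}(p,y)=k\le d-1$. Both exclusions work as you sketch them (for the value $k$, the gate of the line $xp$ with respect to $y$ is forced to coincide with the unique neighbour of $p$ at distance $k-1$ from $y$, which lies in $\m P$, whence the line $xp$ is a line of $\m S$ and fullness puts $x$ in $\m P$; if the line $xp$ has only two points the gate property gives a contradiction even faster). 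The identification of $O'$ as a $1$-ovoid of the far geometry via (PV2) applied to lines at distance $d-1$ from $p$ is also correct. This is essentially the argument of the cited source, so you have not found a shortcut, but you have supplied a proof the paper omits.

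Two small caveats. First, for $d=2$ the classes ``semi-classical'' and ``ovoidal'' are not distinct (the paper only defines semi-classical valuations for $d\ge 3$), so part (2) should be read with $d\ge 3$ for the semi-classical clause; this is harmless here since the paper only uses $d=3$. Second, part (3) as printed concerns collinear points of $\m S$, and your one-line argument (distinct classical valuations have distinct zero sets) does prove that statement; but be aware that the source result is usually stated for distinct collinear points of $\m S'$, which is genuinely stronger and needs an extra ingredient --- take a line $M$ of $\m S$ opposite the line $x_1x_2$ (Lemma \ref{extra}), use that projection between opposite lines is a bijection to find points $y_1,y_2\in M$ gated at $x_1$ and $x_2$ respectively, and derive $m_2=m_1+1$ and $m_2=m_1-1$ simultaneously from $f_{x_1}=f_{x_2}$. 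If the intended reading is $\m S'$, your proof of (3) has a gap; for the literal statement it is complete.
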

\begin{cor}\label{cor:embed}
Suppose $d = 3$ and $m = 2$ in Lemma \ref{lem:embed}. Then the set of points of $\m S$ where $f_x$ is equal to $0$ is a $1$-ovoid of $\m S$. 
\end{cor}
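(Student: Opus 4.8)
The plan is to read off the conclusion directly from Lemma~\ref{lem:embed} together with the description of ovoidal valuations recorded in item~$(b)$ of the list of examples in Section~\ref{sec:val}. Specializing Lemma~\ref{lem:embed} to $d = 3$ and $m = 2$, part~$(1)$ of that lemma tells us that $f_x$ is a valuation of $\m S$ whose maximum value is $M_{f_x} = d - m = 1$; in other words, $f_x$ takes only the values $0$ and $1$.

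Next I would invoke the converse statement in example~$(b)$: if $f$ is a valuation of a generalized polygon with maximum value $1$, then the set of points on which $f$ vanishes is a $1$-ovoid. Applying this with $f = f_x$ shows that $\{\, y \in \m P : f_x(y) = 0 \,\}$ is a $1$-ovoid of $\m S$, which is exactly the assertion of the corollary. Equivalently, one can argue through part~$(2)$ of Lemma~\ref{lem:embed}: since $m = 2 = d - 1$, the valuation $f_x$ is ovoidal, and by the very definition of an ovoidal valuation the zero set of $f_x$ is a $1$-ovoid.

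Since each of these routes is a one-line deduction from results already available in the excerpt, there is no genuine obstacle; the only bookkeeping is the identity $M_{f_x} = d - m = 3 - 2 = 1$ and the observation that ``maximum value $1$'' is precisely the hypothesis under which example~$(b)$ produces a $1$-ovoid. For a fully self-contained argument one could instead verify the defining property of a $1$-ovoid by hand: for any line $L$ of $\m S$, property~(PV2) gives a unique point $x_L \in L$ whose $f_x$-value is one less than that of every other point of $L$; as $f_x$ only takes the values $0$ and $1$, this forces $f_x(x_L) = 0$ and $f_x(y) = 1$ for all other $y \in L$, so $L$ meets the zero set of $f_x$ in exactly one point. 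This is the same computation that underlies example~$(b)$.
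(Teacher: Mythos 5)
Your proposal is correct and is precisely the intended argument: the paper states the corollary without proof as an immediate consequence of Lemma~\ref{lem:embed} (via $M_{f_x}=d-m=1$, equivalently $m=d-1$ making $f_x$ ovoidal) combined with the converse observation in example~$(b)$. Your additional hands-on verification via (PV2) is a correct, slightly more self-contained rendering of the same computation.
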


\medskip \noindent Two lines of a generalized $2d$-gon are called {\em opposite} if they lie at maximal distance $d-1$ from each other. Opposite lines always have the same number of points.

\begin{lem} \label{extra}
Let $\mathcal{S}$ be a generalized $2d$-gon that is contained in a generalized $2d$-gon $\mathcal{S}'$ as a full subgeometry. Then every line $L$ of $\mathcal{S}'$ is opposite to some line of $\mathcal{S}$.
\end{lem}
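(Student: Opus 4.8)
The plan is to prove the equivalent statement that for every line $L$ of $\m S'$ there is a line of $\m S$ lying at distance $d-1$ from $L$ in $\m S'$; all distances, projections and oppositeness relations below are taken inside $\m S'$. The first step is to exhibit a point of $\m S$ lying at the maximal distance $d$ from some point of $L$. Fix any point $x$ incident with $L$ and put $m := \dist(x, \m P)$, so that $m \in \{0, 1, \dots, d-1\}$. By Lemma~\ref{lem:embed}(1) the map $f_x$ is a valuation of $\m S$ with maximum value $M_{f_x} = d - m$, and since $f_x(y) = \dist(x, y) - m$ for every $y \in \m P$, any point $y_0 \in \m P$ at which $f_x$ attains its maximum satisfies $\dist(x, y_0) = d$.

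Next I would determine the distance from $y_0$ to $L$. Here I use the elementary property of a generalized $2d$-gon that, for a point $p$ and a line $K$, every point of $K$ other than the (unique) nearest point $\pi_K(p)$ lies at distance $\dist(p, \pi_K(p)) + 1$ from $p$. If we had $\dist(y_0, L) \leq d - 2$, this property would force every point of $L$, and in particular $x$, to lie at distance at most $d - 1$ from $y_0$, contradicting $\dist(x, y_0) = d$. Hence $\dist(y_0, L) = d - 1$.

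Finally I would analyse the lines through $y_0$. Combining the property just used with the uniqueness axiom for generalized $2d$-gons (for points $u, v$ at distance $i \in \{1, \dots, d-1\}$ there is a unique point collinear with $v$ at distance $i - 1$ from $u$), one obtains the standard fact that whenever a point $z$ satisfies $\dist(z, L) = d - 1$, exactly one of the lines through $z$ contains a point at distance $d - 2$ from $L$, while every other line through $z$ consists entirely of points at distance $d - 1$ from $L$ (note that any point collinear with such a $z$ automatically lies at distance $d - 2$ or $d - 1$ from $L$). Applying this with $z = y_0$, and using that $y_0$ is incident with at least two lines of $\m S$, of which at most one can be the exceptional line, I obtain a line $M$ of $\m S$ through $y_0$ all of whose points lie at distance $d - 1$ from $L$. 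Then $\dist(L, M) = d - 1$, i.e.\ $M$ is opposite $L$, which completes the argument.

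I expect the only substantive ingredient to be Lemma~\ref{lem:embed}, invoked to guarantee that $\m S$ contains a point at maximal distance from the chosen point $x$ of $\m S'$; the two ``standard facts'' about generalized $2d$-gons quoted above are immediate consequences of the girth and diameter conditions together with the near-polygon axioms recalled in Section~\ref{sec:def}. The only thing requiring care is bookkeeping: all the distances and incidences must be read off inside $\m S'$ rather than inside $\m S$ (the inclusion $\m S \subseteq \m S'$ can only decrease distances), and the exceptional line through $y_0$ need not belong to $\m S$ --- which is harmless, since there are at least two lines of $\m S$ available through $y_0$.
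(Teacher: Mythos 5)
Your proposal is correct and follows essentially the same route as the paper's proof: locate a point $y_0$ of $\m S$ at distance $d$ from a point of $L$ via Lemma \ref{lem:embed}, observe that $y_0$ then lies at distance $d-1$ from $L$ with a unique non-opposite line through it, and conclude using that $y_0$ lies on at least two lines of $\m S$. The paper's version is just a more compressed account of the same argument, leaving implicit the bookkeeping you spell out.
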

\begin{proof}
Let $x$ be an arbitrary point of $L$. By Lemma \ref{lem:embed}, we know that there is a point $y$ in $\m S$ at distance $d$ from $x$. This point $y$ has distance $d-1$ from $L$ and so there is a unique line in $\m S'$ through $y$ containing a point of $\Gamma_{d-2}(L)$. Any other line of $\m S'$ through $y$ is opposite to $L$. In particular, there exists a line of $\m S$ through $y$ opposite to $L$.
\end{proof}

\medskip \noindent Let $\m S$ and $\m S'$ be generalized polygons as in Lemma \ref{lem:embed}. For a point $x$ of $\m S'$ the hyperplane of $\m S$ corresponding to the valuation $f_x$ of $\m S$ induced by $x$ will simply be denoted by $H_x$. When $\m S$ and $\m S'$ in Lemma \ref{lem:embed} are generalized hexagons ($d = 3$), then we have $m \leq 2$, and thus the only valuations induced by points of $\m S'$ are classical, semi-classical and ovoidal. Thus, \textit{we only need to study these three types of valuations in a generalized hexagon $\m S$ to study all generalized hexagons that contain $\m S$ as a full subgeometry}. In fact, these are the only types of valuations that can exist in a generalized hexagon \cite[Cor.~3.4]{bdb:polygonal}. 
While the points of $\m S'$ give rise to valuations of $\m S$, the lines of $\m S'$ give rise to certain sets of valuations called \textit{admissible $L$-sets} (see \cite{bdb:polygonal} for a definition),  that we do not discuss in this paper because of their ``technical'' definition. To prove Theorem \ref{thm:main1} we do not need the full machinery of admissible $L$-sets. The following properties of hyperplanes suffice. These properties are implied by \cite[Prop.~4.7]{bdb:polygonal} (which is a result on $L$-sets), but it is possible to give an independent proof as well.

\begin{lem}
\label{lem:hyperplanes}
Let $\m S = (\m P, \m L, \mathrm{I})$ be a generalized $2d$-gon contained in a generalized $2d$-gon $\m S' = (\m P', \m L', \mathrm{I}')$ as a full subgeometry, and let $L$ be a line of $\m S'$. Then:
\begin{enumerate}[$(1)$]
\item the set of hyperplanes $\{H_x : x \in \m P', x~\mathrm{I'}~L\}$ covers $\m P$;
\item if $x_1$, $x_2$ and $x_3$ are three  distinct points on $L$, then $H_{x_1} \cap H_{x_2} = H_{x_1} \cap H_{x_3}$. 
\end{enumerate}
\end{lem}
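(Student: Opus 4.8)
The plan is to prove both statements directly from the definition of the valuations $f_x$ given in Lemma~\ref{lem:embed}, using the elementary geometry of $\m S'$. Fix a line $L$ of $\m S'$ and write $L = \{x_0, x_1, \dots, x_s\}$ for its points (where $s+1$ is the number of points on a line of $\m S'$, which may be infinite, but only finitely many of the arguments below involve more than three of these points).

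For part $(1)$, I would take an arbitrary point $y \in \m P$ and show that $y \in H_{x_i}$ for some $i$. Consider the point $\pi_L(y)$ of $L$ nearest to $y$ in $\m S'$, at distance say $k = \dist_{\m S'}(y, L)$. By axiom (NP2) applied in $\m S'$, every other point $x_j$ of $L$ satisfies $\dist_{\m S'}(y, x_j) = k+1$. Now recall that $\dist_{\m S'}(y,L) \le d - 1$ since $y \in \m P$ and, by Lemma~\ref{lem:embed}(1) applied to the point $\pi_L(y)$ of $\m S'$, we have $\dist(\pi_L(y),\m P) \le d-1$; more directly, pick any $x_j \ne \pi_L(y)$, so $m_j := \dist(x_j,\m P) \le d-1$ and $f_{x_j}(y) = \dist_{\m S'}(x_j,y) - m_j = (k+1) - m_j$. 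I want to choose $x_j$ so that $f_{x_j}(y) < M_{f_{x_j}} = d - m_j$, i.e.\ so that $k + 1 < d$, i.e.\ $k \le d-2$. If $k \le d-2$ this works for any $x_j \ne \pi_L(y)$. The remaining case is $k = d-1$: then $\pi_L(y)$ has $\dist(\pi_L(y),\m P) \le d-1$ but in fact, since $y\in\m P$ realizes distance $d-1$ to $\pi_L(y)$ along a geodesic whose second vertex lies in $\Gamma_{d-2}(L)$... here I would instead argue that $\dist_{\m S'}(\pi_L(y), \m P) = 0$ is impossible to force, so I switch to using $\pi_L(y)$ itself: set $x_i = \pi_L(y)$, $m_i = \dist(x_i,\m P)$, and note $f_{x_i}(y) = (d-1) - m_i = M_{f_{x_i}} - (k - (d-1) + \text{something})$ — this needs care. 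The cleaner route: by Lemma~\ref{extra} there is a line $M$ of $\m S$ opposite $L$ in $\m S'$, and every point of $\m S$ lies at distance $\le d-1$ from $L$; combining with the fact (Lemma~\ref{lem:embed}(1)) that $M_{f_x} = d - \dist(x,\m P)$, a point $y$ fails to be in $H_{x_j}$ for \emph{all} $j$ precisely when $f_{x_j}(y) = d - m_j$ for every $j$, i.e.\ $\dist_{\m S'}(x_j, y) = d$ for every point $x_j$ of $L$ — but this contradicts the existence of the nearest point $\pi_L(y)$, which has $\dist_{\m S'}(\pi_L(y), y) < d$. So $y \in H_{x_i}$ with $x_i = \pi_L(y)$, proving $(1)$. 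I expect the bookkeeping of the boundary case $\dist_{\m S'}(y,L) = d-1$ to be the only delicate point.

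For part $(2)$, the strategy is to show, for distinct $x_1, x_2, x_3 \in L$ and any $y \in \m P$, that $y \in H_{x_1}\cap H_{x_2}$ iff $y \in H_{x_1}\cap H_{x_3}$, which by symmetry reduces to: $y \in H_{x_1}\cap H_{x_2} \iff y \in H_{x_1}\cap H_{x_3}$. Write $m_i = \dist(x_i,\m P)$ and $k = \dist_{\m S'}(y,L)$, with nearest point $\pi_L(y) \in L$. If $\pi_L(y) \notin \{x_1,x_2,x_3\}$, then $\dist_{\m S'}(x_i,y) = k+1$ for $i=1,2,3$, so $f_{x_i}(y) = k+1-m_i$ and $y \in H_{x_i} \iff k+1-m_i < d-m_i \iff k < d-1 \iff k \le d-2$; this is independent of $i$, so $y$ lies in all three $H_{x_i}$ or in none, and the claim holds trivially. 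If $\pi_L(y)$ equals one of them, say $\pi_L(y) = x_1$: then $\dist_{\m S'}(x_2,y) = \dist_{\m S'}(x_3,y) = k+1$, so $y \in H_{x_2} \iff y\in H_{x_3}$ by the same computation, and hence $y \in H_{x_1}\cap H_{x_2} \iff y \in H_{x_1}\cap H_{x_3}$. If $\pi_L(y) = x_2$ (or symmetrically $x_3$), then $\dist_{\m S'}(x_1,y) = \dist_{\m S'}(x_3,y) = k+1$, giving $y \in H_{x_1} \iff y \in H_{x_3}$; intersecting with membership in $H_{x_2}$ yields $H_{x_1}\cap H_{x_2} = H_{x_3}\cap H_{x_2}$, and then also $= H_{x_1}\cap H_{x_3}$ since any $y$ in this common set lies in $H_{x_1}, H_{x_2}, H_{x_3}$ simultaneously. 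In every case equality of the pairwise intersections follows, completing $(2)$.

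The only real obstacle is handling the point $\pi_L(y)$ carefully: the identity $\dist_{\m S'}(x_i, y) = \dist_{\m S'}(y, L) + 1$ holds for all points of $L$ \emph{except} the unique nearest one, and the whole argument hinges on isolating that exception and checking that the resulting membership condition $\dist_{\m S'}(x_i,y) < d$ — equivalently $f_{x_i}(y) < M_{f_{x_i}}$ via $M_{f_{x_i}} = d - \dist(x_i,\m P)$ from Lemma~\ref{lem:embed}(1) — behaves uniformly in $i$ off that exceptional point. No computation beyond comparing $k$ with $d-1$ is needed; the proof is structural.
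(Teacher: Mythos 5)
Your proof is correct and follows essentially the same route as the paper's: both parts reduce to the (NP2) fact that the unique nearest point of $L$ to $y \in \m P$ lies at distance at most $d-1$ (since all other points of $L$ are one step further and the diameter is $d$), combined with the characterization $y \in H_x \Leftrightarrow \dist_{\m S'}(x,y) \le d-1$. The paper's version of $(2)$ is just a more compact form of your case analysis (it observes that at least one of $x_1,x_2$ differs from the nearest point, so that point is at distance $i+1 \le d-1$, whence $\dist(y,x_3) \le i+1 \le d-1$), and your part $(1)$, after the false start, lands on exactly the paper's argument.
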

\begin{proof}
By Lemma \ref{lem:embed} each point $x$ of $\m S'$ determines the hyperplane $H_x$ of $\m S$ defined by taking the points of $\m S$ that are at distance at most $d - 1$ from $x$. 
\begin{enumerate}[$(1)$]
\item By (NP2) for every point $y$ in $\m S$ there exists a unique point $x$ on $L$ nearest to $y$, so that every other point on $L$ is at distance $\dist(x, y) + 1$ from $y$. Since the diameter of $\m S$ is $d$, we must have $\dist(x, y) + 1 \leq d$, implying that $y \in H_{x}$. 
\item By symmetry it suffices to show that $H_{x_1} \cap H_{x_2} \subseteq H_{x_3}$ for any three distinct points $x_1, x_2, x_3$ on $L$.
Let $y \in H_{x_1} \cap H_{x_2}$.
By (NP2) there exists a unique point $y'$ on $L$ nearest to $y$, say at distance $i$, and every other point of $L$ is at distance $i+1$ from $y$.
Since the two distinct points $x_1$ and $x_2$ on $L$ are at distance at most $d-1$ from $y$ we must have $i + 1 \leq d-1$ as at least one of them is distinct from $y'$. 
Therefore, we have $\dist(y, x_3) \leq i+1 \leq d-1$, which is equivalent to $y \in H_{x_3}$. 
\end{enumerate}
\end{proof}

\medskip \noindent A finite generalized hexagon of order $(s, t)$ has $(1+s)(1+st+s^2t^2)$ points. For each of the three types of hyperplanes in generalized hexagons mentioned above, we can determine the sizes of the hyperplanes by simple counting. The singular, semi-singular and ovoidal hyperplanes in a finite generalized hexagon of order $(s, t)$ are of sizes $1 + s(t+1) + s^2t(t+1)$, $1 + s(t+1) + s^2t^2$ and $1 + st + s^2t^2$ respectively.

\begin{lem} \label{lem:main}
Let $\m S$ be a finite generalized hexagon of order $(s, t)$ contained in a generalized hexagon $\m S'$ as a full subgeometry, and let $L$ be a line of $\m S'$ that does not intersect $\m S$. Let $n_L$ denote the number of points on $L$ that are at distance $2$ from $\m S$. 
For a point $x$ in $\m S'$, let $H_x$ denote the hyperplane of $\m S$ formed by taking points of $\m S$ that are at non-maximal distance from $x$. 
Then for any two distinct points $x$ and $y$ on $L$, the cardinality of $H_x \cap H_y$ is  equal to $s + 1 -  n_L$.
\end{lem}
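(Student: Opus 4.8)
The plan is to show that $H_x \cap H_y$ does not depend on which two distinct points $x, y$ of $L$ are chosen, to identify this common set explicitly, and then to count it.

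First I would do the distance bookkeeping inside $\mathcal S'$. Since $L$ does not meet $\mathcal S$, every point $z$ of $\mathcal S$ lies off $L$, so by (NP2) there is a unique point $\pi_L(z)$ of $L$ nearest to $z$; writing $i := \dist(z,L) = \dist(z, \pi_L(z))$, the triangle inequality and the uniqueness of $\pi_L(z)$ force $\dist(z, w) = i + 1$ for every $w \in L \setminus \{\pi_L(z)\}$, and since $\mathcal S'$ has diameter $3$ this gives $i \in \{1, 2\}$. Recalling (from the definition of $H_x$ and Lemma \ref{lem:embed}) that $H_x = \{z \in \mathcal P : \dist(z, x) \le 2\}$ for every point $x$ of $\mathcal S'$, it follows that a point $z \in \mathcal P$ with $\dist(z, L) = 1$ lies in $H_x$ for every $x \in L$, while a point $z$ with $\dist(z, L) = 2$ lies in $H_{\pi_L(z)}$ and in no other $H_x$. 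Hence, for any two distinct points $x, y$ on $L$,
\[
H_x \cap H_y \;=\; C \;:=\; \{\, z \in \mathcal P : \dist(z, L) = 1 \,\},
\]
which is independent of $x$ and $y$ (in particular reproving Lemma \ref{lem:hyperplanes}(2) in this situation), and moreover $C = \bigcap_{w \in L} H_w$.

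It then remains to compute $|C|$, which I would do by grouping the points of $C$ according to their nearest point on $L$. If $z \in \mathcal P$, $w \in L$ and $z \sim w$ in $\mathcal S'$, then (since $z \notin L$) $w$ is necessarily the point $\pi_L(z)$; hence $C$ is the disjoint union over $w \in L$ of the sets $\{z \in \mathcal P : z \sim w \text{ in } \mathcal S'\}$, so $|C| = \sum_{w \in L} |\{z \in \mathcal P : z \sim w\}|$. Now fix $w \in L$ and put $m := \dist(w, \mathcal P)$. Since $L \cap \mathcal S = \emptyset$ we have $m \ge 1$, and by Lemma \ref{lem:embed} $m \le d - 1 = 2$; also $\{z \in \mathcal P : z \sim w\} = \{z \in \mathcal P : \dist(w, z) = m\}$ is nonempty only if $m = 1$, so $w$ contributes $0$ when $m = 2$. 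When $m = 1$, the valuation $f_w$ of $\mathcal S$ induced by $w$ is semi-classical (Lemma \ref{lem:embed}(2)), and a semi-classical valuation attains the value $0$ at exactly one point of $\mathcal S$, namely its centre, so exactly one point of $\mathcal P$ is collinear with $w$ and $w$ contributes $1$. Finally, by Lemma \ref{extra} the line $L$ is opposite to a line of $\mathcal S$ and so has $s + 1$ points, of which exactly $n_L$ satisfy $m = 2$ (these are, by definition, the points of $L$ at distance $2$ from $\mathcal S$) and the other $s + 1 - n_L$ satisfy $m = 1$; summing the contributions gives $|H_x \cap H_y| = |C| = (s + 1 - n_L)\cdot 1 + n_L \cdot 0 = s + 1 - n_L$.

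The step that needs the most care is the assertion that a point of $\mathcal S' \setminus \mathcal S$ lying at distance $1$ from $\mathcal S$ is collinear with a unique point of $\mathcal S$; this is precisely where the theory of valuations is used, via the fact that a semi-classical valuation attains its minimum value at a single point. As a consistency check, and an alternative that avoids partitioning $C$, one can instead double count the incidences between $\mathcal P$ and the covering family $\{H_w : w \in L\}$ from Lemma \ref{lem:hyperplanes}(1): every point of $C$ lies in all $s + 1$ of these hyperplanes and every point of $\mathcal P \setminus C$ in exactly one, so $\sum_{w \in L} |H_w| = s\,|C| + |\mathcal P|$; substituting the known sizes $1 + s(t+1) + s^2 t^2$ for the $s + 1 - n_L$ semi-singular hyperplanes $H_w$ and $1 + s t + s^2 t^2$ for the $n_L$ ovoidal ones, together with $|\mathcal P| = (1 + s)(1 + s t + s^2 t^2)$, and simplifying, again gives $|C| = s + 1 - n_L$.
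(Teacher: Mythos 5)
Your proof is correct, but its main argument is genuinely different from the one in the paper. The paper's proof is purely arithmetic: it invokes Lemma \ref{lem:hyperplanes} to get that the $s+1$ hyperplanes $H_w$, $w \in L$, cover $\m P$ and have a common pairwise intersection $X$ of some size $k$, and then solves for $k$ using the known sizes of ovoidal and semi-singular hyperplanes --- essentially the ``consistency check'' you append at the end. Your primary argument instead identifies the intersection geometrically: by classifying points $z$ of $\m S$ according to $\dist(z,L) \in \{1,2\}$ you show directly that $H_x \cap H_y = \Gamma_1(L) \cap \m P$ for any two distinct $x,y$ on $L$ (thereby reproving Lemma \ref{lem:hyperplanes}(2) in this case), and then you count this set by pairing each of its points with its unique neighbour on $L$, using the fact that a point of $\m S'$ at distance $1$ from $\m S$ is collinear with exactly one point of $\m S$ (the unique zero of the induced semi-classical valuation --- the same fact the paper uses later in the proof of Lemma \ref{lem:main2}). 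Your route buys strictly more information: it exhibits an explicit bijection between $H_x \cap H_y$ and the $s+1-n_L$ points of $L$ at distance $1$ from $\m S$, and it does not need the hyperplane size formulas at all; the paper's route is shorter once Lemma \ref{lem:hyperplanes} and the size computations are in hand, and generalizes more readily to situations where one only knows sizes rather than geometric structure. Both arguments rely on Lemma \ref{extra} to know that $L$ carries exactly $s+1$ points, and your verification of that point, as well as of the case distinction $m \in \{1,2\}$ for points of $L$, is accurate.
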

\begin{proof}
By Lemma \ref{extra} and the fact that opposite lines are incident with the same number of points, we know that every line of $\m S'$ is incident with precisely $s+1$ points. By Lemma \ref{lem:embed}, for every point $x$ on $L$ the hyperplane $H_x$ of $\m S$ is either semi-singular or ovoidal. 
The points on $L$ that are at distance $1$ from $\m S$ induce semi-singular hyperplanes, while those at distance $2$ induce ovoidal hyperplanes. 
By Lemma \ref{lem:hyperplanes} there exists a fixed subset $X$ of points of $\m S$ such that $H_x \cap H_y = X$ for every pair of distinct points $x$, $y$ on $L$, and every point of $\m S$ is contained in some hyperplane induced by a point on $L$. Let the size of $X$ be $k$. There are $n_L$ hyperplanes of size $1+ st + s^2t^2$ (ovoidal) and $s + 1 - n_L$  hyperplanes of size $1 + s(t+1) + s^2t^2$ (semi-singular) which cover a set of size $(1+s)(1+st+s^2t^2)$ (points of $\m S$) and pairwise intersect in $k$ points. Therefore, we have 
\[ n_L (1 + st + s^2t^2 - k) + (s + 1 - n_L)(1 + s(t + 1) + s^2t^2 - k) + k = (1 + s)(1 + st + s^2t^2), \]
which can be solved for $k$ to get $k = s+ 1 - n_L$. 
\end{proof}

\begin{lem} \label{lem:intersecting_semi-classical}
Let $\m S$ be a finite generalized hexagon of order $(s, t)$ having the property that $|H_1 \cap H_2| > s+1$ for any two semi-singular hyperplanes $H_1$ and $H_2$ of $\mathcal{S}$ whose centers lie at distance $3$ from each other. Then there does not exist any generalized hexagon that contains $\m S$ as a full proper subgeometry. 
\end{lem}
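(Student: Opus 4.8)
The strategy is to argue by contradiction: suppose $\m S$ sits as a full proper subgeometry in some generalized hexagon $\m S'$, and derive a violation of the hypothesis $|H_1 \cap H_2| > s+1$. Since $\m S$ is a proper subgeometry, there is at least one point of $\m S'$ not in $\m S$, and by Lemma \ref{extra} (and the remark after it) every line of $\m S'$ is opposite to some line of $\m S$; combining this with Lemma \ref{lem:embed} I would first produce a line $L$ of $\m S'$ that does not meet $\m P$. Concretely: take a point $p \in \m P' \setminus \m P$, so $m := \dist(p, \m P) \in \{1, 2\}$; if $m = 2$ then Corollary \ref{cor:embed} already gives a $1$-ovoid of $\m S$ and hence an ovoidal hyperplane, but more to the point one can walk out along a geodesic to a point at distance $1$ from $\m P$, and then choose a line through that point other than the one meeting $\m P$ — such a line exists because every point lies on $t'+1 \geq 3$ lines and is disjoint from $\m P$. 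So without loss of generality there is a line $L$ of $\m S'$ disjoint from $\m P$.

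Now apply Lemma \ref{lem:main} to this $L$: writing $n_L$ for the number of points of $L$ at distance $2$ from $\m S$, any two of the $s+1$ hyperplanes $\{H_x : x ~\mathrm{I}'~ L\}$ intersect in exactly $s+1-n_L$ points of $\m S$. The point of the proof is that this common intersection size is at most $s+1$, and it equals $s+1$ only when $n_L = 0$. I would split into two cases. If $n_L \geq 1$, then $|H_x \cap H_y| = s+1-n_L < s+1$ for distinct $x,y$ on $L$; but among these hyperplanes there are $s+1-n_L \geq 1$ semi-singular ones and $n_L \geq 1$ ovoidal ones, so in particular there is at least one semi-singular hyperplane and at least one ovoidal hyperplane. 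The hypothesis of the lemma, however, is specifically about pairs of \emph{semi-singular} hyperplanes with centers at distance $3$; so in the case $n_L \geq 1$ I instead need enough semi-singular hyperplanes to find two whose centers are at distance $3$, which may fail if $n_L = s$ (only one semi-singular hyperplane on $L$). This is where the real work lies.

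The cleaner route is to first rule out $n_L \geq 1$ altogether, forcing $n_L = 0$, i.e. every point of every line of $\m S'$ disjoint from $\m S$ is at distance $1$ from $\m S$, so all induced hyperplanes on such lines are semi-singular and any two of them meet in exactly $s+1$ points. Then, to contradict the hypothesis, I must exhibit two semi-singular hyperplanes $H_1, H_2$ arising from points of $\m S'$ whose centers $c_1, c_2$ in $\m S$ lie at distance $3$: by hypothesis such a pair would satisfy $|H_1 \cap H_2| > s+1$, but I can choose $H_1, H_2$ to be $H_x$ and $H_y$ for points $x, y$ on a common line $L$ disjoint from $\m S$ — provided their centers are at distance $3$. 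The center of $H_x$ is the unique point of $\m S$ at distance $1$ from $x$; so I need a line $L$ of $\m S'$, disjoint from $\m S$, whose points $x, y$ have their $\m S$-neighbours $c_x, c_y$ at distance $3$ in $\m S$. Since distinct collinear points of $\m S'$ give distinct valuations (Lemma \ref{lem:embed}(3)), the centers $c_x$ for $x$ on $L$ are pairwise distinct points of $\m S$; and two points of $\m S'$ on $L$ at distance $1$ from $\m S$ have $\m S$-neighbours at distance $1$ or $3$ (distance $2$ is impossible by a short parity/geodesic argument in $\m S'$, since a path $c_x - x - y - c_y$ of length $3$ combined with a length-$2$ path $c_x - z - c_y$ in $\m S \subseteq \m S'$ would create a pentagon in $\m S'$). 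If some two centers were at distance $1$ — i.e. $c_x, c_y$ collinear in $\m S$ via a line $M$ — then $M, L$ and the two connecting edges would form a quadrilateral in $\m S'$, which is impossible in a hexagon; hence all the $c_x$ are pairwise at distance $3$, and picking any two distinct ones gives the required pair, contradicting the hypothesis.

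The main obstacle, and the step I would be most careful about, is the dichotomy on $n_L$ and in particular showing one can always take $n_L = 0$ (equivalently, that no line of $\m S'$ disjoint from $\m S$ has a point at distance $2$ from $\m S$): if $n_L \geq 1$ on every such line we would be forced to work with the ovoidal hyperplanes, and the stated hypothesis says nothing about intersections involving ovoidal hyperplanes, so Lemma \ref{lem:main} alone (giving $|H_x \cap H_y| = s+1-n_L \leq s$) would not contradict anything — it would just be a constraint. Resolving this cleanly probably uses the full strength of the admissible $L$-set machinery from \cite{bdb:polygonal} referenced just before Lemma \ref{lem:hyperplanes}, or a direct argument that the presence of an ovoidal hyperplane on $L$ together with a neighbouring line forces a semi-singular-to-semi-singular pair at distance $3$ with large intersection elsewhere. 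I expect the paper handles this by the observation that if $n_L = 0$ fails one can move to an adjacent line and the distance-to-$\m S$ values propagate, eventually producing a line with $n_L = 0$, or by directly noting that a point at distance $2$ from $\m S$ forces (via Corollary \ref{cor:embed}) a $1$-ovoid and then counting; pinning down exactly which of these arguments is intended is the part of the plan I am least sure of.
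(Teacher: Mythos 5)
You have the right overall strategy — produce a line $L$ of $\m S'$ disjoint from $\m P$, apply Lemma \ref{lem:main} to get $|H_x \cap H_y| = s+1-n_L \le s+1$ for points $x,y$ on $L$, and then contradict the hypothesis with a pair of semi-singular hyperplanes whose centers are at distance $3$ — but the step you flag as unresolved is a genuine gap, and your proposed ways of closing it (forcing $n_L=0$, invoking admissible $L$-sets, propagating to adjacent lines) are not how it is done and would not obviously work. For an \emph{arbitrary} line $L$ through a point $x$ at distance $1$ from $\m S$, nothing prevents every other point of $L$ from lying at distance $2$ from $\m S$ (i.e.\ $n_L = s$), in which case $L$ carries only one semi-singular hyperplane and the hypothesis, which concerns only pairs of semi-singular hyperplanes, gives no contradiction. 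Your plan never rules this out.

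The missing idea is to choose $L$ using the internal structure of the hyperplane $H_x$ rather than arbitrarily. Write $H_x = \{x'\} \cup \Gamma_1(x') \cup O'$, where $x'$ is the unique point of $\m S$ collinear with $x$ and $O'$ is the associated $1$-ovoid of the geometry induced on $\Gamma_3(x')$. Pick any $y \in O'$; then $f_x(y)=1$, so $\dist_{\m S'}(x,y)=2$, and one lets $z$ be the common neighbour of $x$ and $y$ in $\m S'$. This $z$ cannot lie in $\m S$ (it would be a second point of $\m S$ collinear with $x$), so $z$ is at distance $1$ from $\m S$ with unique $\m S$-neighbour $y$, hence $H_z$ is semi-singular with center $y$; moreover the line $L=xz$ misses $\m P$ (a point of $\m P$ on it would have to be $x'$, forcing $\dist(x',y)\le 2$). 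Now Lemma \ref{lem:main} applied to this particular $L$ gives $|H_x \cap H_z| = s+1-n_L \le s+1$, while the centers $x'$ and $y$ are at distance $3$ by construction — contradiction. This completely sidesteps the dichotomy on $n_L$: one never needs $n_L=0$, only the existence of \emph{two} points of $L$ at distance $1$ from $\m S$ with far-apart centers, which the construction hands you for free. Your auxiliary observations (that centers of distinct points on $L$ are distinct and cannot be collinear) are correct but are not needed once $L$ is chosen this way.
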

\begin{proof}
Suppose that $\m S'$ is such a generalized hexagon and let $x$ be a point of $\m S'$ that is at distance $1$ from the point set of $\m S$.  By Lemma \ref{lem:embed}, $H_x$ is a semi-singular hyperplane, corresponding to the semi-classical valuation $f_x$ defined by $f_x(y) = \dist(x, y) - 1$ for points $y$ of $\mathcal{S}$. Let $x'$ be the unique point of $\m S$ with $f_x$-value $0$, or equivalently the unique point of $\m S$ at distance $1$ from $x$. Let $O'$ be the $1$-ovoid in the subgeometry of $\m S$ induced on $\Gamma_3(x')$ which defines the hyperplane $H_x$. Let $y$ be a point of $O'$. Then $f_x(y) = 1$, and hence $\dist(x, y) = 2$. 
Let $z$ be the common neighbour of $x$ and $y$. Then $z$ must lie outside $\m S$, and the line $L = xz$ does not contain any point of $\m S$. Note that $f_z$ is also a semi-classical valuation since $z$ has distance 1 from $\mathcal{S}$. From Lemma \ref{lem:main} it follows that $|H_x \cap H_z| \leq s + 1$. Moreover we have $\dist(x', y) = 3$, thus contradicting the assumption stated in the lemma. 
\end{proof}

We will use Lemma \ref{lem:intersecting_semi-classical} to prove Theorem \ref{thm:main1}$(2)$ in Section \ref{sec:proof1}.
One way to prove the finiteness of generalized hexagons containing a subhexagon is as follows, which we use to prove the case when $\m H$ is isomorphic to $\mathrm{H}(2)^D$, $\mathrm{H}(3)$ or $\mathrm{H}(4)^D$ in Theorem \ref{thm:main1}. 

\begin{lem}
\label{lem:main2}
Let $\m S$ be a finite generalized hexagon with only thick lines that is contained in a generalized hexagon $\m S'$ as a full subgeometry. If every point of $\m S'$ is at distance at most $1$ from $\m S$, then $\m S'$ is also finite. 
\end{lem}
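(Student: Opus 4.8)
The plan is to bound the number of lines of $\m S'$ by showing that every line of $\m S'$ lies close to $\m S$, and then to bound the number of lines through a point of $\m S$ (hence through any point, by the order of $\m S'$). First I would note that by Lemma \ref{extra} every line of $\m S'$ is opposite to some line of $\m S$, and in particular (by the remark on opposite lines having equally many points) every line of $\m S'$ has exactly $s+1$ points, where $(s,t)$ is the order of $\m S$ and $s \geq 2$ since $\m S$ has only thick lines. The key structural input is the hypothesis that every point of $\m S'$ is at distance at most $1$ from $\m S$; combined with the fact that $\m S$ is a \emph{full} subgeometry, this forces every line of $\m S'$ to meet $\m S$ or to have all its points at distance exactly $1$ from $\m S$. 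I would then argue: if $L$ is a line of $\m S'$ not meeting $\m S$, then every point $x$ on $L$ is at distance $1$ from $\m S$, so $H_x$ is a semi-singular hyperplane with some center $x' \in \m S$, and $x'$ is the unique point of $\m S$ collinear with $x$ in $\m S'$. Since there are only finitely many points $x'$ in $\m S$ and, for a fixed $x' \in \m S$, only finitely many lines of $\m S'$ through $x'$ (because $x'$ is a point of $\m S$ — see below), the points $x$ at distance $1$ from $\m S$ that are collinear (in $\m S'$) with a given $x'$ all lie on one of the $t'+1$ lines through $x'$, where $t'+1$ is the number of lines of $\m S'$ through $x'$; but each such line is opposite to a line of $\m S$, which bounds things only once we know $t'$ is finite. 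So the real content is to bound $t'$.

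To bound $t'$, fix a point $p$ of $\m S$ and count the lines of $\m S'$ through $p$. A line of $\m S'$ through $p$ is either a line of $\m S$ (there are $t+1$ of these) or a line $L$ of $\m S'$ through $p$ with no other point of $\m S$ on it. For the latter, every point $x \neq p$ on $L$ is at distance $1$ from $\m S$ (by hypothesis) but not on $\m S$, so its nearest point of $\m S$, call it $x'$, is distinct from $p$ and satisfies $\dist_{\m S'}(p, x') = 2$; the common neighbour of $p$ and $x'$ in $\m S'$ is exactly $x$. Thus each such line $L$ through $p$ determines, via its $s$ points other than $p$, a set of $s$ points $x'$ of $\m S$ at distance $2$ from $p$ in $\m S'$, together with their connecting points. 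The map from pairs $(L, x)$ with $x \in L \setminus \{p\}$ to pairs (point $x'$ of $\m S$, a line of $\m S$ through $x'$) — sending $x$ to $x'$ and to the unique line of $\m S$ through $x'$ nearest $p$ — should be injective once one checks, using the generalized hexagon axioms in $\m S'$, that $x$ is recovered from $x'$ (it is the point of $\Gamma_1(x') \cap \Gamma_1(p)$ in $\m S'$, but we must ensure different $L$'s give different such configurations). The cleanest route is: two distinct lines $L_1, L_2$ of $\m S'$ through $p$, neither meeting $\m S$ again, cannot contain points $x_1 \in L_1$, $x_2 \in L_2$ with the same nearest point $x'$ of $\m S$, because then $x_1$ and $x_2$ are both the unique common neighbour of $p$ and $x'$ in $\m S'$, forcing $x_1 = x_2$ and hence $L_1 = L_2$. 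Therefore the number of such lines through $p$ is at most the number of points of $\m S$ at distance $2$ from $p$ in $\m S'$, divided by $s$; and the points of $\m S$ at distance $2$ from $p$ in $\m S'$ are at distance $2$ from $p$ within $\m S$ as well (a geodesic of length $2$ through a point outside $\m S$ still connects two points of $\m S$ that are at distance $2$ in the full subgeometry $\m S$, since $\m S$ is a generalized hexagon and the two points of $\m S$ on such a path are collinear-or-at-distance-$2$), of which there are exactly $s^2 t(t+1)$. Hence $t' + 1 \leq (t+1) + s^2 t (t+1)/s = (t+1)(1 + st)$, a finite quantity depending only on $s$ and $t$.

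Once $t'$ is bounded, finiteness of $\m S'$ follows: a generalized hexagon of order $(s, t')$ with $s, t'$ finite has $(1+s)(1 + st' + s^2 t'^2)$ points. (One should first check $\m S'$ genuinely has an order, i.e. that every line of $\m S'$ has $s+1$ points — done above via Lemma \ref{extra} — and every point of $\m S'$ has the same number $t'+1$ of lines; the latter holds because $\m S'$ is a thick generalized hexagon, as it contains the thick $\m S$ and every line of $\m S'$ has $s + 1 \geq 3$ points, and thick generalized polygons have an order.) I expect the main obstacle to be the injectivity/counting step in the previous paragraph: one has to be careful that the point $x'$ of $\m S$ associated to a point $x \in L \setminus \{p\}$ is well-defined (uniqueness of the nearest point of $\m S$ — this is where "distance at most $1$ from $\m S$" together with $\m S$ being a full subgeometry is used, via Lemma \ref{lem:embed}, which tells us $f_x$ is a valuation and in fact semi-classical with a unique center) and that distinct lines through $p$ really do yield distinct configurations, which amounts to invoking axiom (2) of generalized hexagons (uniqueness of common neighbours at distance $2$) inside $\m S'$. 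A secondary subtlety is to confirm that a point of $\m S$ lying at distance $2$ from $p$ in $\m S'$ also lies at distance $2$ from $p$ in $\m S$ — this uses that $\m S$ is a full subgeometry and the rigidity of geodesics in generalized hexagons — so that the count $s^2 t (t+1)$ is valid.
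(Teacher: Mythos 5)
Your overall strategy---show that $\m S'$ has constant line size $s+1$ and a constant number of lines through each point, then bound the number of lines through one well-chosen point by injecting into the finite point set of $\m S$ via the uniqueness of common neighbours of two points at distance $2$---is exactly the paper's. But your central counting step fails because you based it at a point $p$ of $\m S$. If $L$ is a line of $\m S'$ through $p$ meeting $\m S$ only in $p$, and $x \in L \setminus \{p\}$, then $x$ is collinear with $p \in \m S$, so $\dist(x, \m P) = 1$; since by Lemma \ref{lem:embed} the valuation $f_x$ is then semi-classical, $x$ has a \emph{unique} neighbour in $\m S$, and that neighbour must be $p$ itself. Your claim that the nearest point $x'$ of $\m S$ to $x$ ``is distinct from $p$ and satisfies $\dist_{\m S'}(p,x')=2$'' is therefore false: $x'=p$ for every such $x$. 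The map $(L,x)\mapsto x'$ is constant, the injectivity argument is vacuous, and the bound $t'+1\le (t+1)(1+st)$ has no derivation. (The bound itself is not obviously wrong---it is satisfied, for instance, by $\mathrm{H}(q)^D\subseteq \mathrm{T}(q^3,q)^D$, where the hypothesis of the lemma holds---but nothing in your argument establishes it.)

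The repair is to change the base point, which is what the paper does: take a point $x$ at distance $1$ from $\m S$ (one exists once $\m S'\neq\m S$, and it suffices to bound the degree there because the degree is constant). Exactly one line through $x$ meets $\m S$, namely the one joining $x$ to its unique neighbour $y$ in $\m S$. For any other line $L$ through $x$ and any $z\in L\setminus\{x\}$, the hypothesis gives $z$ a unique neighbour $z'$ in $\m S$, and distinct lines through $x$ yield distinct points $z'$: if two coincided, that point of $\m S$ would lie at distance $2$ from $x$ and have two common neighbours with $x$ in the generalized hexagon $\m S'$. Hence the number of lines through $x$ is at most $1+|\m P|$, which is finite. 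Your auxiliary observations (thickness of all lines of $\m S'$ via Lemma \ref{extra}, constancy of the point degree via opposite points, and the final point count) are fine and match the paper.
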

\begin{proof}
As opposite lines have the same number of points, Lemma \ref{extra} implies that every line of $\m S'$ is also thick. In any generalized hexagon, opposite points (i.e. points at distance 3) are incident with the same number of lines. The fact that every line of $\m S'$ is thick then implies that the points of $\m S'$ are incident with a constant number of lines. 
Indeed, since the lines of $\m S'$ are thick,  for any two distinct collinear points $u$ and $v$ of $\m S'$, there is a point $w$ opposite to both $u$ and $v$, see \cite[\S 1.5]{VM}.

Suppose now that every point of $\m S'$ is at distance at most $1$ from $\m S$ and that $\m S' \neq \m S$. Let $x$ be a point in $\m S'$ at distance $1$ from $\m S$. Then it suffices to show that there are only finitely many lines through $x$.

Note that $x$ induces a semi-classical valuation $f_x$ on $\m S$, and thus there exists a unique point $y$ of $\m S$ with $f_x$-value $0$, which by Lemma \ref{lem:embed} is the unique point of $\m S$ at distance $1$ from $x$. Therefore, there is a unique line through $x$ in $\m S'$ which meets $\m S$. Now, let $L$ be any other line through $x$. Pick any point $z$ in $L \setminus \{ x \}$. Then $z$ is again collinear with a unique point $z'$ of $\m S$ as every point of $\m S'$, and in particular $z$, is at distance at most $1$ from $\m S$. In this manner we can correspond each line of $\m S'$ through $x$ that does not intersect $\m S$ to a point of $\m S$. Moreover, for two distinct lines $L_1, L_2$ through $x$ not meeting $\mathcal{S}$ the points $z_1',z_2'$ of $\m S$ obtained in this manner by taking points $z_1 \in L_1 \setminus \{x\}$ and $z_2 \in L_2 \setminus \{x\}$ must be distinct, as otherwise we will get a pair of points at distance $2$ from each in the generalized hexagon $\m S'$ that have at least two common neighbours. Since the number of points in $\m S$ is finite, this shows that there are only finitely many lines through $x$. 
\end{proof}

\begin{cor}
\label{cor:ovoids}
If a generalized hexagon $\m S$ does not have any $1$-ovoids, then it cannot be contained in a semi-finite generalized hexagon as a full subgeometry. 
\end{cor}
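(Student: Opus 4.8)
The plan is to argue by contradiction, essentially combining Corollary~\ref{cor:embed} with Lemma~\ref{lem:main2}. Suppose the generalized hexagon $\m S$ --- which, as in every application of this corollary, we take to be finite --- has no $1$-ovoid, yet is contained as a full subgeometry in some semi-finite generalized hexagon $\m S'$. The first point to record is that $\m S'$, being thick, has an order $(s, t')$ with $2 \leq s < \infty$ and $t' = \infty$; hence every line of $\m S'$ is incident with exactly $s + 1 \geq 3$ points, and since $\m S$ is a \emph{full} subgeometry, every line of $\m S$ is incident with the very same $s + 1 \geq 3$ points. In particular $\m S$ has only thick lines, which is precisely the hypothesis of Lemma~\ref{lem:main2} that is not completely immediate.

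Next I would split into cases according to the distance from points of $\m S'$ to the point set $\m P$ of $\m S$. By Lemma~\ref{lem:embed}, every point $x$ of $\m S'$ satisfies $m := \dist(x, \m P) \in \{0, 1, 2\}$. If some point $x$ of $\m S'$ has $m = 2$, then Corollary~\ref{cor:embed} immediately yields a $1$-ovoid of $\m S$, namely the set of points of $\m S$ on which the induced valuation $f_x$ vanishes; this contradicts the hypothesis on $\m S$. In the remaining case every point of $\m S'$ lies at distance at most $1$ from $\m S$, and then Lemma~\ref{lem:main2} (whose hypotheses were checked above) forces $\m S'$ to be finite, contradicting its semi-finiteness. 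Either way we reach a contradiction, so no such $\m S'$ can exist, and this is the assertion.

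I do not expect a genuine obstacle here: the corollary is essentially a repackaging of Corollary~\ref{cor:embed} (which disposes of the case $m = 2$) together with Lemma~\ref{lem:main2} (which disposes of the case where all points lie at distance at most $1$ from $\m S$). The only step that requires a moment's thought is the preliminary reduction to the situation in which $\m S$ has only thick lines, so that Lemma~\ref{lem:main2} is applicable; and this is painless once one uses that a thick generalized polygon has a constant number of points on its lines together with the fullness of the embedding.
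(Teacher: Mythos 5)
Your argument is correct and is essentially the paper's own proof: Corollary~\ref{cor:embed} rules out points of $\m S'$ at distance $2$ from $\m S$ (since such a point would induce a $1$-ovoid), and Lemma~\ref{lem:main2} then forces $\m S'$ to be finite. Your extra preliminary check that $\m S$ has only thick lines (via fullness and the constant line size of the thick hexagon $\m S'$) is a hypothesis of Lemma~\ref{lem:main2} that the paper leaves implicit, so including it is a harmless refinement rather than a different route.
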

\begin{proof}
Let $\m S'$ be a generalized hexagon containing $\m S$ as a full subgeometry. Suppose that $\m S$ has no $1$-ovoids. Then from Corollary \ref{cor:embed} it follows that every point of $\m S'$ is at distance at most $1$ from $\m S$, and so $\m S'$ must be finite by Lemma \ref{lem:main2}. 
\end{proof}

\section{Proof of Theorem \ref{thm:main1}} \label{sec:comp} \label{sec:proof1}

Let $\m S = (\m P, \m L, \mathrm{I})$ be a point-line geometry. If for every pair of distinct lines $L_1, L_2 \in \m L$ we have $\{x \in \m P : x ~\mathrm{I}~L_1\} \neq \{x \in \m P : x ~\mathrm{I}~L_2\}$ then every line of $\m S$ can be uniquely identified with the set of points incident with the line. This condition holds true for partial linear spaces, and hence for near polygons and generalized $n$-gons with $n \geq 3$. Thus we can look at these point-line geometries as hypergraphs $(V, E)$ where $V = \m P$ and $E = \{\{x \in \m P : x~\mathrm{I}~L\} : L \in \m L\}$. A $1$-ovoid in $\m S$ is then equivalent to an exact hitting set in the hypergraph $(V, E)$. 
Determining whether an arbitrary hypergraph contains an exact hitting set, which is equivalent to determining if the dual hypergraph has an exact cover,  is a well known NP-hard problem \cite{Ka}.

One of the most famous algorithms to determine exact covers in hypergraphs is the Dancing Links algorithm by Knuth \cite{Kn}. We will use this algorithm to compute both ovoidal and semi-singular hyperplanes in small generalized hexagons. The algorithm is already implemented in SAGE \cite{sage} under the name of \verb|DLXCPP| and its code is publicly available \footnote{see \url{http://www.sagenb.org/src/combinat/matrices/dlxcpp.py}}. The following function written in SAGE computes all $1$-ovoids in a given point-line geometry. 

\begin{verbatim}
def ovoids(P,L):
    """
    Find all 1-ovoids in a point-line geometry. 

    Args:
    P -- the points of the geometry
    L -- the lines of the geometry

    Returns:
    a list of 1-ovoids (exact hitting sets) in the geometry
    """
    map = dict() # to construct the dual problem of exact covers
    for p in P:
        map[p] = []
    for i in range(len(L)):
        for p in L[i]:
            map[p].append(i)
    E = [map[p] for p in P]
    return list(DLXCPP(E))
\end{verbatim}
This function can also be used to compute semi-singular hyperplanes of a generalized polygon, since they correspond to $1$-ovoids in the subgeometry induced on the points at maximum distance from a given point.

Computer models of the point-line geometries $\mathrm{H}(2), \mathrm{H}(2)^D, \mathrm{H}(3), \mathrm{H}(4)$ and $\mathrm{H}(4)^D$ can easily be constructed in GAP \cite{gap} using the fact that the automorphism groups of these generalized hexagons act primitively on the set of points; the function \verb|AllPrimitiveGroups| can be used to obtain all primitive groups of given degree and size \footnote{Another way of constructing these geometries in GAP is via the FinIng package which is available in the most recent version of GAP.}. The file \verb|main.g| in \cite{code} contains the code we used to construct the automorphism groups, points and lines of these geometries. This data can alternatively be obtained from the online database on small generalized polygons maintained by Moorhouse \cite{Moorhouse}. 
The function \verb|ovoids| when run on points and lines of $\mathrm{H}(2)^D$ immediately shows that this generalized hexagon has no $1$-ovoids, and thus by Corollary \ref{cor:ovoids} it cannot be contained as a full subgeometry in a semi-finite generalized hexagon. The same conclusion holds for $\mathrm{H}(4)^D$ by the results of \cite{ab-fi}. The main idea behind the computations in \cite{ab-fi} is the observation that since $\mathrm{H}(4, 1)$ is a subgeometry of $\mathrm{H}(4)^D$, we can first classify all $1$-ovoids of $\mathrm{H}(4, 1)$, which correspond to perfect matchings in the incidence graph of $\mathrm{PG}(2, 4)$, up to isomorphism  under the action of the stabilizer, and then see if any of these $1$-ovoids ``extends'' to a $1$-ovoid of $\mathrm{H}(4)^D$.

Let $\m H \cong \mathrm{H}(3)$ and let $\m S$ be a generalized hexagon containing $\mathrm{H}(3)$ as a full subgeometry. Say there exists a point $x$ in $\m S$ at distance $2$ from $\m H$ and let $x, y, z$ be a path of length $2$ from $x$ to a point $z$ of $\m H$. Then by Lemma \ref{lem:main} we have $|H_x \cap H_y| \leq 3$, where $H_x$ is the $1$-ovoid of $\m H$ induced by $x$ and $H_y$ is the semi-singular hyperplane of $\m H$ induced by $y$. Note that $z \in H_x$ and $z$ is the center of $H_y$. Since the geometry is small enough and the automorphism group acts transitively on the points, we can fix a point $p$ of $\mathrm{H}(3)$ in our computer model and look at the intersection sizes of 1-ovoids through $p$ and semi-singular hyperplanes with center $p$. The code in the file \verb|main.sage| \cite{code} checks these intersection sizes and we have found that every pair of ovoidal and semi-singular hyperplane of $\mathrm{H}(3)$ through a fixed point which is moreover the center of the semi-singular hyperplane intersect in more than $3$ points. Therefore, every point of $\m S$ must be at distance at most $1$ from $\m H$. Then it follows from Lemma \ref{lem:main2} that $\m S$ is finite. 

Finally, let $\m H$ be isomorphic to $\mathrm{H}(2)$ or $\mathrm{H}(4)$ and let $q$ be the order of $\m H$. By Lemma \ref{lem:intersecting_semi-classical}, to show that $\m H$ cannot be embedded in any generalized hexagon as a proper full subgeometry, it suffices to check that for every pair of points $x_1, x_2 \in \m H$ at distance $3$ from each other and for every pair of semi-singular hyperplanes $H_1, H_2$ with respective centers $x_1$ and $x_2$, we have $|H_1 \cap H_2| > q+1$. 
We have done this check in \verb|main.sage| \cite{code}. Note that by distance transitivity of the automorphism group we only need to check this for one pair of points at distance $3$ from each other, thus reducing the amount of computations. 

\begin{remark} {\em Generalized hexagons of order greater than $4$ seem to be out of reach with our computational methods. And we do not know of any results on intersection sizes of semi-singular and ovoidal hyperplanes of split Cayley hexagons that can help us obtain the above results in general. It would be nice to be able to prove that for all prime powers $q = p^r$, with $p \neq 3$ prime, every pair of semi-singular hyperplanes in $\mathrm{H}(q)$ whose centers are at maximum distance $3$ intersect in more than $q + 1$ points, which will then imply that these generalized hexagons cannot be contained in bigger generalized hexagons as full subgeometries.}
\end{remark}

\section{Further questions}

\begin{enumerate}[$(1)$]
\item For any prime power $q > 4$, is there any semi-finite generalized hexagon that contains either the split Cayley hexagon $\mathrm{H}(q)$ or its dual $\mathrm{H}(q)^D$ as a subgeometry? We conjecture that there are no such semi-finite generalized hexagons. 

\item For any prime power $q$, is there a semi-finite generalized hexagon that contains $\mathrm{H}(q,1)$ as a subgeometry? We believe that this problem is much harder than $(1)$ as none of our techniques have worked so far in solving it (not even for the smallest case $q=2$).

\item Is the dual  split Cayley hexagon $\mathrm{H}(q)^D$ the unique generalized hexagon of order $(q, q)$ containing the hexagon $\mathrm{H}(q, 1)$ as a subgeometry? 
For $q = 3$ this was proved by De Medts and Van Maldeghem but it remains open for all $q > 3$. 

\item Is the dual twisted triality hexagon $\mathrm{T}(q^3, q)^D$ the unique generalized hexagon of order $(q, q^3)$ containing the dual split Cayley hexagon $\mathrm{H}(q)^D$ as a subgeometry?
The theory of valuations can be useful in proving this for $q = 3$, and in fact we have used it to derive some  properties of generalized hexagons of order $(3, t)$ containing $\mathrm{H}(3) \cong \mathrm{H}(3)^D$, but so far we have not been successful.

\end{enumerate}


\begin{thebibliography}{99}
\setlength{\itemsep}{-2pt}

\bibitem{ab-bdb:1} A. Bishnoi and B. De Bruyn.  On semi-finite hexagons of order $(2,t)$ containing a subhexagon. {\em Ann. Comb.} 20 (2016), 433--452.

\bibitem{code} A. Bishnoi and B. De Bruyn. Code for ``On generalized hexagons of order $(3, t)$ and $(4, t)$ containing a subhexagon''. Online available document, \url{http://cage.ugent.be/geometry/preprints.php}. 

\bibitem{ab-fi} A. Bishnoi and F. Ihringer. Some non-existence results for ovoids in small generalized polygons, preprint at \href{http://arxiv.org/abs/1606.07288}{arXiv:1606.07288}. 

\bibitem{Br} A. E. Brouwer. A nondegenerate generalized quadrangle with lines of size four is finite. pp.  47--49 in {\em Advances in finite geometries and designs} (Chelwood Gate, 1990). Oxford Sci. Publ., Oxford Univ. Press, 1991. 

\bibitem{Bu-VM} F. Buekenhout and H. Van Maldeghem. Finite distance-transitive generalized polygons. {\em Geom. Dedicata} 52 (1994) 41--51.

\bibitem{Ca} P. J. Cameron. Orbits of permutation groups on unordered sets. II. {\em J. London Math. Soc.} 23 (1981), 249--264. 

%\bibitem{Cohen} A.M. Cohen, E.A. O’Brien and  S. Shpectorov, On the uniqueness of the generalized octagon of order $(2, 4)$, \emph{J. Algebra} 421 (2015) 369--393.

\bibitem{Ch} G. Cherlin. Locally finite generalized quadrangles with at most five points per line. {\em Discrete Math.} 291 (2005), 73--79.

\bibitem{CT} A. M. Cohen and J. Tits. On generalized hexagons and a near octagon whose lines have three points. {\em European J. Combin.} 6 (1985), 13--27. 

\bibitem{bdb:valuation} B. De Bruyn. The valuations of the near polygon $\mathbb{G}_n$. {\em Electron. J. Combin.} 16 (2009), Research paper 137, 29pp.

\bibitem{bdb:polygonal} B. De Bruyn. Polygonal valuations. {\em Discrete Math.} 313 (2013), 84--93. 

\bibitem{bdb:Ree-Tits} B. De Bruyn. The uniqueness of a certain generalized octagon of order (2,4). {\em Discrete Math.} 338 (2015), 2125--2142.

\bibitem{bdb:survey} B. De Bruyn. The use of valuations for classifying point-line geometries. pp. 27--40 in ``Groups of exceptional type, Coxeter groups and related geometries'', {\em Springer Proc. Math. Stat.} 82, Springer, 2014.

\bibitem{bdb-pvdc:1} B. De Bruyn and P. Vandecasteele. Valuations of near polygons. {\em Glasg. Math. J.} 47 (2005), 347--361.

\bibitem{bdb-vanhove} B. De Bruyn and F. Vanhove. Inequalities for regular near polygons, with applications to $m$-ovoids. {\em European J. Combin.} 34 (2013), 522--538. 

\bibitem{jdk-hvm} J. De Kaey and H. Van Maldeghem. A characterization of the split Cayley generalized hexagon $H(q)$ using one subhexagon of order $(1,q)$. {\em Discrete Math.} 294 (2005), 109--118.

\bibitem{DeMedts-VanMaldeghem} T. De Medts and H. Van Maldeghem. The uniqueness of a generalized hexagon of order $3$ containing a subhexagon of order $(1,3)$. {\em Discrete Math.} 309 (2009),  714--720.


\bibitem{dw-vm:1} A. De Wispelaere and H. Van Maldeghem. A distance-$2$-spread of the generalized hexagon $\mathrm{H}(3)$. {\em Ann. Combin.} 8 (2004), 133--154.

\bibitem{dw-vm:1.5} A. De Wispelaere and H. Van Maldeghem. Codes from generalized hexagons. {\em Des. Codes Cryptogr.} 37 (2005), 435--448.

\bibitem{dw-vm:2} A. De Wispelaere and H. Van Maldeghem. Some new two-character sets in $\mathrm{PG}(5,q^2)$ and a distance-$2$ ovoid in the generalized hexagon $\mathrm{H}(4)$. {\em Discrete Math.} 308 (2008), 2976--2983.

\bibitem{Feit-Higman} W. Feit and G. Higman. The nonexistence of certain generalized polygons. {\em J. Algebra} 1 (1964),  114--131.

\bibitem{gap} The GAP Group, GAP -- Groups, Algorithms, and Programming, Version 4.7.5; 2014. \url{http://www.gap-system.org}.

\bibitem{Ka} R. M. Karp. Reducibility among combinatorial problems. pp. 85--103 in {\em Complexity of computer computations} (Proc. Sympos., IBM Thomas J. Watson Res. Center, Yorktown Heights, N.Y., 1972). Plenum, New York, 1972.

\bibitem{Kn} D. E. Knuth. Dancing links. pp. 187--214 in {\em Millennial Perspectives in Computer Science, Cornerstones of Computing}. Palgrave Macmillan, 2000.

\bibitem{Moorhouse} E. Moorhouse. {\em Generalised Polygons of Small Order}, \url{http://www.uwyo.edu/moorhouse/pub/genpoly/}.

\bibitem{FGQ} S. E. Payne and J. A. Thas. {\em Finite generalized quadrangles}. European Mathematical Society, Z\"urich, 2009.

\bibitem{Pech-Reichard} C. Pech and S. Reichard. Enumerating set orbits. pp. 137--150 in {\em Algorithmic algebraic combinatorics and Gr\"obner bases.} Springer-Verlag, 2009.

\bibitem{sage} \emph{{S}age {M}athematics {S}oftware ({V}ersion 6.3)}, The Sage Developers, 2014, \url{http://www.sagemath.org}.
  
\bibitem{Tits} J. Tits. Sur la trialit\'e et certains groupes qui s'en d\'eduisent. {\em Inst. Hautes Etudes Sci. Publ. Math.} 2 (1959), 13--60.

\bibitem{Tits2} J. Tits. {\em Buildings of spherical type and finite BN-pairs.} Lecture Notes in Mathematics 386. Springer-Verlag, 1974.


\bibitem{VM} H. Van Maldeghem. {\em Generalized polygons}. Birkh\"{a}user, Basel, 1998.

\end{thebibliography}
\end{document}